\newcommand*\patchAmsMathEnvironmentForLineno[1]{%
  \expandafter\let\csname old#1\expandafter\endcsname\csname #1\endcsname
  \expandafter\let\csname oldend#1\expandafter\endcsname\csname end#1\endcsname
  \renewenvironment{#1}%
     {\linenomath\csname old#1\endcsname}%
     {\csname oldend#1\endcsname\endlinenomath}}% 
\newcommand*\patchBothAmsMathEnvironmentsForLineno[1]{%
  \patchAmsMathEnvironmentForLineno{#1}%
  \patchAmsMathEnvironmentForLineno{#1*}}%
\renewcommand{\subsectionmark}[1]{}
\newenvironment{enumeratearabic*}{
\begin{enumerate*}[label=(\arabic*)] %%, leftmargin=0pt,labelindent=2em,itemindent=!]
}{
\end{enumerate*}
}
\newenvironment{enumerateroman*}{
\begin{enumerate*}[label=(\roman*)] %%, leftmargin=0pt,labelindent=2em,itemindent=!]
}{
\end{enumerate*}
}
\numberwithin{equation}{section}
\newtheorem{theoremcounter}{theoremcounter}[section]
\newtheorem{maintheoremcounter}{maintheoremcounter}
\theoremstyle{plain}
\newtheorem{corollary}[theoremcounter]{Corollary}
\newtheorem{lemma}[theoremcounter]{Lemma}
\newtheorem{proposition}[theoremcounter]{Proposition}
\newtheorem{theorem}[theoremcounter]{Theorem}
\theoremstyle{plain}
\newtheorem{maintheorem}[maintheoremcounter]{Theorem}
\theoremstyle{definition}
\newtheorem{definition}[theoremcounter]{Definition}
\theoremstyle{remark}
\newtheorem{remark}[theoremcounter]{Remark}
\theoremstyle{nonumberremark}
\newcommand{\tx}{\ensuremath{\text}}
\newcommand{\thdash}{\nbd th}
\newcommand{\nbd}{\nobreakdash-\hspace{0pt}}
 \newcommand{\texpdf}[2]{#1}
 \newcommand{\texpdf}[2]{\texorpdfstring{#1}{#2}}
\newcommand{\tbf}{\bfseries}
\newcommand{\bbone}{\ensuremath{\mathds{1}}}
\newcommand{\cE}{\ensuremath{\mathcal{E}}}
\newcommand{\cH}{\ensuremath{\mathcal{H}}}
\newcommand{\cO}{\ensuremath{\mathcal{O}}}
\newcommand{\rmd}{\ensuremath{\mathrm{d}}}
\newcommand{\rmB}{\ensuremath{\mathrm{B}}}
\newcommand{\rmE}{\ensuremath{\mathrm{E}}}
\newcommand{\rmH}{\ensuremath{\mathrm{H}}}
\newcommand{\rmL}{\ensuremath{\mathrm{L}}}
\newcommand{\rmM}{\ensuremath{\mathrm{M}}}
\newcommand{\rmS}{\ensuremath{\mathrm{S}}}
\newcommand{\rmZ}{\ensuremath{\mathrm{Z}}}
\newcommand{\ra}{\ensuremath{\rightarrow}}
\newcommand{\lra}{\ensuremath{\longrightarrow}}
\newcommand{\mto}{\ensuremath{\mapsto}}
\newcommand{\lmto}{\ensuremath{\longmapsto}}
\renewcommand{\Re}{\ensuremath{\mathrm{Re}}}
\renewcommand{\Im}{\ensuremath{\mathrm{Im}}}
\newcommand{\isdiv}{\ensuremath{\mathop{\mid}}}
\renewcommand{\pmod}[1]{\ensuremath{\;(\mathrm{mod}\, #1)}}
\newcommand{\Hom}{\ensuremath{\mathrm{Hom}}}
\newenvironment{psmatrix}{\left(\begin{smallmatrix}}{\end{smallmatrix}\right)}
\newcommand{\lspan}{\ensuremath{\mathop{\mathrm{span}}}}
\newcommand{\ZZ}{\ensuremath{\mathbb{Z}}}
\newcommand{\QQ}{\ensuremath{\mathbb{Q}}}
\newcommand{\RR}{\ensuremath{\mathbb{R}}}
\newcommand{\CC}{\ensuremath{\mathbb{C}}}
\newcommand{\SL}[1]{\ensuremath{\mathrm{SL}_{#1}}}
\newcommand{\sym}{\ensuremath{\mathrm{sym}}}
\newcommand{\HS}{\mathbb{H}}
\let\phi\varphi
\newcommand{\sfd}{\mathsf{d}}
\newcommand{\ga}{\gamma}
\newcommand{\Ga}{\Gamma}
\newcommand{\symd}{\sym^\sfd}
\newcommand{\para}{\mathrm{pb}}
\newcommand{\Bpara}{\rmB^1_{\para}}
\newcommand{\Zpara}{\rmZ^1_{\para}}
\newcommand{\Hpara}{\rmH^1_{\para}}
\newcommand{\pilow}{\pi^{\mathrm{low}}}
\renewcommand{\Im}{\operatorname{Im}} % The default Im is not an operator
\mathchardef\xboxplus=\numexpr \boxplus-"2000\relax
\renewcommand{\boxplus}{\mathbin{\mathop{\xboxplus}}}
\newcommand{\cdotop}{\;\cdot\;}
\newcommand{\headertitle}{{\normalfont%
  Eichler integrals and generalized second order Eisenstein series
}}
\newcommand{\headerauthors}{%
  A.~Ahlb\"ack,
  T.~Magnusson,
  M.~Raum%
}
\title{%
  Eichler integrals and\\generalized second order Eisenstein series
}
\author{%
  Albin Ahlb\"ack%
\and
  Tobias Magnusson%
\and
  Martin Raum%
\thanks{The author was partially supported by Vetenskapsr\aa det Grant~2015-04139 and~2019-03551.}%
}
\begin{document}

\thispagestyle{scrplain}
\begingroup
\deffootnote[1em]{1.5em}{1em}{\thefootnotemark}
\maketitle
\endgroup

%%%%%%%%%%%%%%%%%%%%%%%%%%%%%%%%%%%%%%%%%%%%%%%%%%
%%% ABSTRACT

\begin{abstract}
\small
\noindent
{\tbf Abstract:}
We show that all Eichler integrals, and more generally all ``generalized second order modular forms'' can be expressed as linear combinations of corresponding generalized second order Eisenstein series with coefficients in classical modular forms. We determine the Fourier series expansions of generalized second order Eisenstein series in level one, and provide tail estimates via convexity bounds for additively twisted $\rmL$-functions. As an application, we illustrate a bootstrapping procedure that yields numerical evaluations of, for instance, Eichler integrals from merely the associated cocycle. The proof of our main results rests on a filtration argument that is largely rooted in previous work on vector-valued modular forms, which we here formulate in classical terms.
\\[.3\baselineskip]
\noindent
\textsf{\textbf{%
  second order modular forms%
}}%
\noindent
\ {\tiny$\blacksquare$}\ %
\textsf{\textbf{%
  period polynomials%
}}%
\noindent
\ {\tiny$\blacksquare$}\ %
\textsf{\textbf{%
  Rankin-Selberg convolution%
}}
\\[.2\baselineskip]
\noindent
\textsf{\textbf{%
  MSC Primary:
  11F11
}}%
\ {\tiny$\blacksquare$}\ %
\textsf{\textbf{%
  MSC Secondary:
  11F30, 11F75
}}
\end{abstract}

%%%%%%%%%%%%%%%%%%%%%%%%%%%%%%%%%%%%%%%%%%%%%%%%%%
%%% TABLE OF CONTENTS

% \vspace{-1.5em}
% \renewcommand{\contentsname}{}
% \setcounter{tocdepth}{2}
% \tableofcontents
% \vspace{1.5em}

%%%%%%%%%%%%%%%%%%%%%%%%%%%%%%%%%%%%%%%%%%%%%%%%%%
%%% INTRODUCTION

\Needspace*{4em}
\addcontentsline{toc}{section}{Introduction}
\markright{Introduction}
%\lettrine[lines=2,nindent=.2em]{\tbf I}{n} recent years, there has been considerable interest in modular graph functions, see for example~\cite{dhoker-etal-2015,dhoker-green-2018,dhoker-duke-2018,dhoker-green-pioline-2019,dorigoni-etal-poincare1-2022,dorigoni-etal-poincare2-2022}. Determining Fourier series expansions of modular graph functions or evaluating them is intricate. Indeed, the special case of Fourier series expansions of two-loop modular graph functions was only recently completed by D'Hoker and Duke~\cite{dhoker-duke-2018}. Note that two-loop modular graph functions are merely a special case of depth~$2$ modular graph functions. Dorigoni--Kleinschmidt--Schlotterer~\cite{dorigoni-etal-poincare1-2022} remarked that all modular graph functions of depth~$2$ can be expressed in terms of iterated Eichler-Shimura integrals, suggesting a change in perspective. Furthermore, Mertens--Raum~\cite{mertens-raum-2021} reinterpreted iterated Eichler-Shimura integrals as components of vector-valued modular forms of higher-depth real-arithmetic types.

\lettrine[lines=2,nindent=.2em]{\tbf I}{n} recent years, there has been a surge of interest in the study of iterated Eichler-Shimura integrals, see for example~\cite{brown-2018-2,brown-2018,brown-2020,diamantis-2020-preprint}. This has in part been motivated by the fact that they are closely related to the string theoretic notion of holomorphic graph functions and modular graph functions. Indeed, the $A$-cycle graph functions and $B$-cycle graph functions introduced by Broedel-Schlotterer-Zerbini~\cite{broedel-schlotterer-zerbini-2019} can be expressed in terms of iterated Eisenstein integrals. Diamantis~\cite{diamantis-2020-preprint} showed that iterated Eichler-Shimura integrals are also closely related to higher order modular forms, first introduced by Goldfeld~\cite{goldfeld-1999}. In a similar vein Mertens--Raum~\cite{mertens-raum-2021} showed that both higher order modular forms and iterated Eichler-Shimura integrals can be reinterpreted as components of vector-valued modular forms of higher depth arithmetic types. In this paper, we explore the problem of efficiently computing Eichler-Shimura integrals by means of the aforementioned theoretical frameworks, focusing on the special case of Eichler integrals of cusp forms of level one. Our hope is that this approach can eventually be generalized to provide alternative means of computing point evaluations and Fourier series expansions of modular graph functions, for example considered by D'Hoker-Duke in~\cite{dhoker-duke-2018}.

We offer two theorems, that demonstrate the viability of our approach. They also prepare the route to a more general framework (see Remark~\ref{rm:thm:snd_order_eisenstein_saturation}). In Section~\ref{sec:g2mf}, we introduce the notion of \emph{generalized second order modular forms}, and show that Eichler integrals of cusp forms are examples. Generalized second order modular forms are holomorphic functions on~$\HS$ subject to an appropriate growth condition and whose modular deficits are cocycles taking values in polynomials whose coefficients are modular forms. Their name is motivated by the fact that they generalize second order modular forms. Indeed, the modular deficits of second order modular forms are cocycles with values in modular forms.

The notion of Eisenstein series extends from classical modular forms to generalized second order modular forms. In Section~\ref{sec:eis}, we prove our first main theorem, which provides Fourier series expansions for the generalized second order Eisenstein series $E_k^{[1]}(\tau;\phi)$ and $E_k^{[1]}(\tau;\phi,j)$, where $\phi$ is a parabolic cocycle for the symmetric power representation~$\symd(X)$ or it dual~$\symd(X)^\vee$ (see Section~\ref{ssec:preliminaries:vector_valued_symd}), respectively. 
\begin{maintheorem}
\label{mainthm:g2es}
Let~$\sfd\geq 0$ and~$k\geq 5 + \sfd$ be even integers, and let~$0\leq j\leq\sfd$ be an integer. Then if\/~$\phi$ and~$\phi^\vee$ are parabolic cocycles for $\symd(X)$ and\/~$\symd(X)^\vee$, respectively, we have
\begin{gather*}
  E_k^{[1]}(\tau;\, \phi)
\;=\;
  \sum_{r = 0}^{\sfd}
  (X - \tau)^r\,
  \sum_{n = 1}^\infty c(n)_r e(n \tau)
\quad\tx{and}\quad
  E_k^{[1]}(\tau; \phi^\vee,j)
=
  \sum_{n = 1}^\infty c(n) e(n \tau)
\tx{,}
\end{gather*}
where
\begin{align*}
  c(n)_r
\;&{}=\;
  \sum_{\substack{\gamma \in \Ga_\infty \backslash \SL{2}(\ZZ) \slash \Ga_\infty\\\ga \not\in \Ga_\infty}}
  \frac{e(n \frac{d}{c})}{c^k}\,
  \sum_{j = r}^{\sfd}
  \phi(\gamma^{-1})_{j}\;
  \sum_{l = r}^j
  \frac{(-2 \pi i)^{k - j + l}}{(k - j + l - 1)!}
  \binom{l}{r}
  \binom{j}{l}\,
  \big(-\tfrac{d}{c}\big)^{l-r}
  n^{k - j + l - 1}
\quad\tx{and}
\\
  c(n)
\;&{}=\;
  \sum_{\substack{\gamma \in \Ga_\infty \backslash \SL{2}(\ZZ) \slash \Ga_\infty\\\ga \not\in \Ga_\infty}}
  \frac{(-1)^{j-k} e(n \tfrac{d}{c})}{c^{k+2j-\sfd}}
  \sum_{r = 0}^j
  \binom{j}{r}
  \frac{(2 \pi i)^{k + r}}{(k + r - 1)!}\,
  \phi^\vee(\gamma^{-1}) \Big( \big( X + \tfrac{d}{c} \big)^{\sfd-j+r} \Big)
  n^{k + r - 1}
\tx{,}
\end{align*}
and~$(c,d)$ are the bottom entries of the double coset representative $\ga$.
\end{maintheorem}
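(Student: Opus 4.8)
The plan is to unfold the defining group sums for the two generalized second order Eisenstein series $E_k^{[1]}(\tau;\phi)$ and $E_k^{[1]}(\tau;\phi^\vee,j)$, to collapse the summation over $\Ga_\infty\bquo\SL{2}(\ZZ)$ to one over the non-trivial double cosets $\Ga_\infty\bquo\SL{2}(\ZZ)\slash\Ga_\infty$ using the parabolicity of the cocycles, and then to evaluate the remaining inner summation over a copy of $\ZZ$ by the Lipschitz formula
\[
  \sum_{n\in\ZZ}\frac{1}{(\tau+n)^s}
  \;=\;
  \frac{(-2\pi i)^s}{(s-1)!}\sum_{m=1}^\infty m^{s-1}\,e(m\tau)
  \qquad(\Im\tau>0,\ s\geq 2)\tx{.}
\]
Everything after that is a bookkeeping of binomial coefficients. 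I would first dispatch absolute and locally uniform convergence of all the series in play --- which is what licenses the rearrangements and the termwise use of the Lipschitz formula --- from the hypothesis $k\geq 5+\sfd$ together with the at most polynomial growth of parabolic cocycles for $\symd(X)$ and $\symd(X)^\vee$.

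Next I would recall from Section~\ref{sec:eis} that, up to the mixed weight-$k$ and symmetric-power-$\sfd$ slash action, $E_k^{[1]}(\tau;\phi)$ is an average of the cocycle value $\phi(\ga^{-1})$ over $\ga\in\Ga_\infty\bquo\SL{2}(\ZZ)$, the coset $\Ga_\infty$ contributing nothing (it is killed by parabolicity, since $\phi(1)=0$ and $\phi|_{\Ga_\infty}$ is a coboundary, or is excluded from the definition outright). Since the bottom row $(c,d)$ is a left-coset invariant, the surviving sum is regrouped by choosing for each double coset with $c\neq0$ a representative $\ga=\begin{psmatrix}a&b\\c&d\end{psmatrix}$ and summing over its lifts $\ga T^n$, $n\in\ZZ$, namely the matrices with bottom row $(c,d+nc)$; parabolicity is precisely what lets the cocycle factor $\phi(\ga^{-1})$ be frozen at the chosen representative. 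It is worth noting at this point that the relevant Fourier expansion is with respect to the simultaneous substitution $\tau\mapsto\tau+1$, $X\mapsto X+1$, which fixes $X-\tau$; this explains why the answer is naturally a polynomial in $X-\tau$ with $q$-series coefficients.

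For a fixed representative $\ga$ I would then expand the summand at $\ga T^n$. Using $c(\tau+n)+d=c(\tau+\tfrac dc+n)$ and $\ga(T^n\tau)=\tfrac ac-\tfrac1{c^2(\tau+d/c+n)}$, the mixed slash of the $j$-th piece $\phi(\ga^{-1})_j X^j$ of $\phi(\ga^{-1})(X)$ by $\ga T^n$ expands, after pulling out the factor $c^{-k}$, into a finite linear combination of powers $(\tau+\tfrac dc+n)^{-s}$ with polynomial coefficients; re-expressing those coefficients in the basis $\{(X-\tau)^r\}_{r=0}^{\sfd}$ produces the binomials $\binom jl$ and $\binom lr$, the powers $(-\tfrac dc)^{l-r}$, and the exponent $s=k-j+l$ of the statement. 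Applying the Lipschitz formula to $\sum_{n\in\ZZ}(\tau+\tfrac dc+n)^{-s}$ and splitting off $e(m\tfrac dc)$ from $e\bigl(m(\tau+\tfrac dc)\bigr)$ turns each inner sum into a $q$-series; reading off the coefficient of $e(n\tau)$ and of $(X-\tau)^r$ then gives $c(n)_r$, the Lipschitz constant $\tfrac{(-2\pi i)^{k-j+l}}{(k-j+l-1)!}$ and the factor $n^{k-j+l-1}$ accounting for the $s=k-j+l$ term.

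The dual statement is handled along the same lines, with three adjustments: the symmetric-power action is replaced by its contragredient, so that $\phi^\vee(\ga^{-1})$ enters as a linear functional evaluated on a polynomial, which after the M\"obius expansion is $\bigl(X+\tfrac dc\bigr)^{\sfd-j+r}$; the distinguished index $j$ singles out one component, collapsing the output to a scalar $q$-series; and the exponent $c^{-(k+2j-\sfd)}$, the constant $(2\pi i)^{k+r}/(k+r-1)!$ and the sign $(-1)^{j-k}$ come out of tracking normalizations through the contragredient slash and the Lipschitz formula (here with $s=k+r$). The step I expect to be the real obstacle is not any of these individually but the faithful execution of the combinatorial bookkeeping in the previous paragraph: pinning down the triple-indexed identity with the exact ranges $r\le l\le j\le\sfd$ (respectively $0\le r\le j$), the precise powers of $c$ and of $-d/c$, and every sign --- the analytic input being entirely routine once convergence is in hand.
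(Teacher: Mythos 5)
Your plan follows the paper's proof essentially step for step: unfold over $\Ga_\infty\backslash\SL{2}(\ZZ)$, regroup by double cosets using parabolicity and the cocycle relation (which leaves the twist $\symd(T^{-m})\phi(\ga^{-1})$, i.e.\ $X\mapsto X+m$, so the cocycle value is not quite ``frozen'' at the representative --- though your subsequent expansion yielding the exponent $s=k-j+l$ accounts for exactly this), apply the Lipschitz formula, and carry out the binomial bookkeeping, with the dual case likewise resting on $\ga T^m\tau=\tfrac ac-\tfrac1{c^2(\tau+d/c+m)}$ and the contragredient identity producing $(X+\tfrac dc)^{\sfd-j+r}$. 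This is the same route as the paper, which supplies the convergence input by citing Proposition~5.4 of~\cite{mertens-raum-2021} rather than re-deriving it from cocycle growth.
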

Furthermore, we provide explicit bounds for the tails of the Fourier series expansions of $E_k^{[1]}(\tau;\phi)$ and $E_k^{[1]}(\tau;\phi,j)$ in Section~\ref{ssec:eval_g2es}, which we employ in our primary example in Section~\ref{sec:bootstrap}.

With their Fourier series expansions at hand, it becomes interesting to consider which generalized second order modular forms can be expressed in terms of these Eisenstein series. In Section~\ref{sec:saturation}, we prove our second main theorem which shows that any generalized second order modular form of type $(\bbone,\symd)$ or $(\symd,\bbone)$ can, after multiplication with some power of the discriminant modular form $\Delta$, be expressed as a linear combination of products of Eisenstein series and generalized second order Eisenstein series, together with a remainder term that is a classical modular form or a vector-valued modular form for $\symd$. The modular remainder term can be expressed in terms of Eisenstein series~\cite{raum-xia-2020} or by any other means.

The formal statement of the theorem features the saturation of ideals~\cite{cox-little-oshea-2015} (see also Section~\ref{sec:saturation}), the spaces of generalized second order modular forms from Definition~\ref{def:generalized_second_order_modular_forms}, and the spaces of generalized second order Eisenstein series that we provide in~\eqref{eq:def:generalized_second_order_eisenstein_modules}.

\begin{maintheorem}
\label{mainthm:saturation}
Given integers~$0 \le \sfd$ and~$k_0 \geq 5 + \sfd$, we have
\begin{alignat*}{3}
  \big(
  \rmE^{\mathrm{pure}[1]}_{\ge k_0}(\symd(X), \bbone)
  +
  \rmM_{\bullet,\sfd}
  \,:\, \Delta^\infty
  \big)
\;&{}=\;
  \rmM^{[1]}_\bullet(\symd(X), \bbone)
\quad\tx{and}
\\
  \big(
  \rmE^{\mathrm{pure}[1]}_{\ge k_0}(\bbone, \symd(X))
  +
  \rmM_\bullet
  \,:\, \Delta^\infty
  \big)
\;&{}=\;
  \rmM^{[1]}_\bullet(\bbone, \symd(X))
\tx{.}
\end{alignat*}
\end{maintheorem}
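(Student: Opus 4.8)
The plan is to prove both identities the same way; I describe the first and indicate the changes for the second at the end. Write $\rmM^{[1]}_\bullet := \rmM^{[1]}_\bullet(\symd(X),\bbone)$ and $I := \rmE^{\mathrm{pure}[1]}_{\ge k_0}+\rmM_{\bullet,\sfd}$, a sum of graded $\rmM_\bullet$-submodules of $\rmM^{[1]}_\bullet$. The backbone is the \emph{modular deficit} homomorphism
\[
  \delta\colon \rmM^{[1]}_\bullet \lra \Zpara\bigl(\SL{2}(\ZZ),\,\symd(X)\otimes\rmM_\bullet\bigr),
  \qquad f \lmto \bigl(\sigma \mto f|\sigma - f\bigr),
\]
which is well defined by Definition~\ref{def:generalized_second_order_modular_forms} and, since classical modular forms are slash-invariant, is a homomorphism of graded $\rmM_\bullet$-modules. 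I would first record that $\ker\delta = \rmM_{\bullet,\sfd}$: a slash-invariant $\symd(X)$-valued holomorphic function of polynomial growth has a holomorphic Fourier expansion at the cusp in each component, hence is a classical vector-valued modular form for $\symd$, and the converse is clear. Since $\rmM_{\bullet,\sfd}\subseteq I$ and $I$ is a submodule of $\rmM^{[1]}_\bullet$, the inclusion $(I:\Delta^\infty)\subseteq\rmM^{[1]}_\bullet$ is immediate; only the reverse inclusion needs proof.

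Fix $f\in\rmM^{[1]}_\bullet$. If, for some $N$, one has $\Delta^N\delta(f)=\delta(\Delta^N f)\in\delta\bigl(\rmE^{\mathrm{pure}[1]}_{\ge k_0}\bigr)$ --- say $\delta(\Delta^N f)=\delta(e)$ with $e\in\rmE^{\mathrm{pure}[1]}_{\ge k_0}$ --- then $\Delta^N f-e\in\ker\delta=\rmM_{\bullet,\sfd}$, so $\Delta^N f\in I$ and $f\in(I:\Delta^\infty)$. Hence the theorem reduces to the assertion that the graded $\rmM_\bullet$-submodule $\delta\bigl(\rmE^{\mathrm{pure}[1]}_{\ge k_0}\bigr)$ of $\Zpara\bigl(\SL{2}(\ZZ),\symd(X)\otimes\rmM_\bullet\bigr)$ has full $\Delta^\infty$-saturation. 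Observe that $\SL{2}(\ZZ)$ acts trivially on the coefficient modular forms, so that $\Zpara\bigl(\SL{2}(\ZZ),\symd(X)\otimes\rmM_\bullet\bigr)\cong\rmM_\bullet\otimes_\CC\Zpara\bigl(\SL{2}(\ZZ),\symd(X)\bigr)$ is a free $\rmM_\bullet$-module of rank $\dim_\CC\Zpara(\SL{2}(\ZZ),\symd(X))<\infty$.

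To prove the reduced assertion I would, using Main Theorem~\ref{mainthm:g2es}, identify the modular deficit of $E_k^{[1]}(\tau;\phi)$ for even $k\ge k_0$ as a fixed nonzero classical Eisenstein series $E_k$ times $\phi$, up to a nonzero scalar and the evident $\SL{2}(\ZZ)$-equivariant identification: the Fourier expansion there is $\CC$-linear and injective in $\phi$, so $\phi\mto\delta\bigl(E_k^{[1]}(\tau;\phi)\bigr)$ is injective with image $E_k\cdot\Zpara(\SL{2}(\ZZ),\symd(X))$. Since $\delta(g\,e)=g\,\delta(e)$ for classical $g$ and $\rmM_\bullet=\CC[E_4,E_6]$ is spanned by products of Eisenstein series, the products comprising $\rmE^{\mathrm{pure}[1]}_{\ge k_0}$ give $\delta\bigl(\rmE^{\mathrm{pure}[1]}_{\ge k_0}\bigr)=\fraka\otimes_\CC\Zpara(\SL{2}(\ZZ),\symd(X))$, where $\fraka:=\sum_{k\ge k_0,\ \text{even}}(E_k)\subseteq\rmM_\bullet$. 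It then suffices that $\fraka$ contain a power of $\Delta$, i.e.\ that already $E_{k_0},E_{k_0+2},E_{k_0+4}$ have no common zero on $\operatorname{Proj}\rmM_\bullet$: none at the cusp, where each has constant term $1$; none at the elliptic points, since among three consecutive even weights one is divisible by $6$ and one by $4$, while $E_{6m}$ and $E_{4m}$ are nonzero at $\rho$ and $i$ respectively; and none elsewhere, by the classical description of the zeros of Eisenstein series. Granting this, $\Delta^N\delta(f)\in\delta\bigl(\rmE^{\mathrm{pure}[1]}_{\ge k_0}\bigr)$ once $N$ is large, and $f\in(I:\Delta^\infty)$.

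The step I expect to be the main obstacle is the identification, in the previous paragraph, of $\delta$ on the Eisenstein module with the whole cocycle module up to $\Delta$-saturation --- that is, coupling the Fourier-analytic input of Main Theorem~\ref{mainthm:g2es} to a statement uniform in the weight, including the finiteness of $\Zpara(\SL{2}(\ZZ),\symd(X))$ and the fact that the second order Eisenstein series exhaust it in every weight $\ge k_0$. This is where the vector-valued viewpoint is decisive, and it is the filtration argument advertised in the introduction: realizing $\rmM^{[1]}_\bullet$ as vector-valued modular forms for a depth-two arithmetic type assembled from $\bbone$ and $\symd$ produces a short exact sequence $0\to\rmM_{\bullet,\sfd}\to\rmM^{[1]}_\bullet\xrightarrow{\delta}\Extpara(\bbone,\symd)\to 0$ of finitely generated $\rmM_\bullet$-modules, and the surjectivity of vector-valued Eisenstein series onto such $\Ext$-modules --- rendered in classical terms --- forces $\delta\bigl(\rmE^{\mathrm{pure}[1]}_{\ge k_0}\bigr)$ to generate $\img\delta$ after inverting $\Delta$; the hypothesis $k_0\ge 5+\sfd$ is exactly the range in which Main Theorem~\ref{mainthm:g2es}, hence this surjectivity in each weight, holds. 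The second identity follows verbatim, replacing $\symd(X)$ by $\symd(X)^\vee$, the series $E_k^{[1]}(\tau;\phi)$ by the family $E_k^{[1]}(\tau;\phi^\vee,j)$ with $0\le j\le\sfd$, and using that $\ker\delta=\rmM_\bullet$ there.
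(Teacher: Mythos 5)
Your overall architecture matches the paper's: both proofs send a generalized second order modular form to its modular deficit, viewed as an element of $\Zpara(\SL{2}(\ZZ),\symd(X))\otimes\rmM_\bullet$ (resp.\ $\Zpara(\SL{2}(\ZZ),\symd(X)^\vee)\otimes\rmM_{\bullet,\sfd}$), identify the kernel with the classical forms, and reduce the theorem to saturating the image of the Eisenstein module at $\Delta$. For the first identity this works as you describe, since the deficit of $E_k^{[1]}(\cdot;\phi)$ is a scalar Eisenstein series $E_k$ tensored with $\phi$, so everything reduces to $(\rmE_{\ge k_0}:\Delta^\infty)=\rmM_\bullet$ (Proposition~\ref{prop:saturation_at_delta_scalar_valued}). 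Your route to that fact --- $E_{k_0},E_{k_0+2},E_{k_0+4}$ have no common zero on $\operatorname{Proj}\rmM_\bullet$, hence the ideal they generate contains a power of $\Delta$ --- is a legitimate alternative to the paper's Rankin/Kohnen--Zagier product identity $\Delta^m\in\CC E_{12m}+\sum\CC E_{12m-k}E_k$, though it leans on the Rankin--Swinnerton-Dyer description of the zeros of $E_k$ at the elliptic points, which you should cite rather than wave at.

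The genuine gap is the sentence ``the second identity follows verbatim.'' It does not. For type $(\bbone,\symd(X))$ the deficit of $E_k^{[1]}(\cdot;\phi^\vee,j)$ is $\phi^\vee$ tensored with the \emph{vector-valued} Eisenstein series $E_k(\cdot;\sfd,j)\in\rmM_{k,\sfd}$, so the module you must saturate is $\Zpara(\SL{2}(\ZZ),\symd(X)^\vee)\otimes\rmE_{\ge k_0,\sfd}$ inside $\Zpara(\SL{2}(\ZZ),\symd(X)^\vee)\otimes\rmM_{\bullet,\sfd}$, and the required input is $(\rmE_{\ge k_0,\sfd}:\Delta^\infty)=\rmM_{\bullet,\sfd}$. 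This is Proposition~\ref{prop:saturation_at_delta_vector_valued}, the technical heart of the paper: it is proved by an induction along the Kuga--Shimura filtration $\rmM_{\bullet,\sfd}[j]$, using that $\pilow_{\bullet,\sfd,j}$ sends $E_k(\cdot;\sfd,j)$ to $E_{k+2j-\sfd}$ (Corollary~\ref{cor:eisenstein_series_symd_filtration_steps}) and a Four-Lemma/cokernel comparison at each filtration step. Your common-zero argument lives in the ring $\rmM_\bullet$ and does not transfer to the $\rmM_\bullet$-module $\rmM_{\bullet,\sfd}$, which is not a ring; and your closing appeal to ``the surjectivity of vector-valued Eisenstein series onto such $\Ext$-modules'' is precisely the statement that needs proof, not a known fact you may invoke. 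To complete the argument you must supply the filtration induction (or an equivalent), after which the rest of your reduction goes through.
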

Theorem~\ref{mainthm:saturation} is proved in Section~\ref{sec:saturation} using Rankin's product identity and a filtration for vector-valued modular forms of type $\symd$.

In Section~\ref{sec:bootstrap}, we illustrate how to combine Theorem~\ref{mainthm:g2es} and Theorem~\ref{mainthm:saturation} to obtain expressions for Eichler integrals of cusp forms in terms of generalized second order Eisenstein series, using only their modular deficit and potentially a small number of point evaluations. These expressions allow for an efficient evaluation at any point on the Poincar\'e upper half plane. We also provide a concrete example of this method applied to the Eichler integral~$\cE(\Delta)$ of~$\Delta$. It should be noted that, in contrast to modular graph functions, one can compute the Fourier series expansion of $\cE(\Delta)$ directly and then evaluate this series. However, our method works for all generalized second order modular forms.

To be precise, we provide expressions for $\Delta^2\cE(\Delta)$ and $\Delta^5\cE(\Delta)$ in terms of sums of products of Eisenstein series and pure generalized second order Eisenstein series, together with a modular remainder term. In the former case, the low weight forces the remainder term to vanish. This makes it possible to directly evaluate $\Delta^2 \cE(\Delta)$ given only its modular deficit. Despite the appeal of this, the low weight also implies that the evaluations of the Eisenstein series are slow with comparatively large error terms. In the case of $\Delta^5 \cE(\Delta)$, the Eisenstein series can be evaluated more quickly, and the improvement in convergence is exponential in~$n$ when considering~$\Delta^n \cE(\Delta)$, where $n \ge 2$. The modular remainder term, however, will not vanish in this situation. We can determine it by evaluating~$\Delta^5 \cE(\Delta)$ at finitely many points. In the general setting of~$\Delta^n\cE(\Delta)$, the number of evaluations required grows linearly in~$n$. These evaluations can be performed using a previously computed expression for~$\Delta^2 \cE(\Delta)$. That is, we use a few slow evaluations to obtain an expression that can be evaluated faster. We think of this procedure as analogous to the statistical concept of ``bootstrapping''~\cite{efron-1993}.

In a forthcoming paper, we plan to develop our method so that it can be applied directly to various modular graph functions.

%%%%%%%%%%%%%%%%%%%%%%%%%%%%%%%%%%%%%%%%%%%%%%%%%%
%%% PAPER BODY

\section{Preliminaries}

In this section we summarize the notation that we use throughout the paper, and describe the fundamental notions of vector-valued modular forms. For the basics of classical modular forms, we refer to the book by Miyake~\cite{miyake-1989}, and the book by Diamond and Shurman~\cite{diamond-shurman-2005}. We adopt some notation from~\cite{diamond-shurman-2005} without further mentioning; for example, $\Ga_\infty$ denotes the parabolic subgroup of~$\SL{2}(\ZZ)$ including negative identity, and $\rmM_k$ and~$\rmS_k$ are spaces of modular forms and of cusp forms of weight $k$ and level one.

\subsection{Vector-valued modular forms}

Let $S=\begin{psmatrix}0&-1\\1&0\end{psmatrix}$ and $T=\begin{psmatrix}1&1\\0&1\end{psmatrix}$ be the standard generators for $\SL2(\ZZ)$. An arithmetic type is a finite-dimensional complex representation $\rho$ of $\SL2(\ZZ)$. We denote the representation space of $\rho$ by $V(\rho)$ and write $\rho^\vee$ for the dual of $\rho$.

Let $k$ be an integer and let $\rho$ be an arithmetic type. Let $f:\HS\to V(\rho)$ be a function and let $\|\cdot\|$ be a norm on $V(\rho)$. If there exists a number $a\in\RR$ such that for all $\ga\in\SL2(\ZZ)$ we have uniformly in $\Re(\tau)$ that
\begin{gather*}
  \big\| \big( f\big|_k\ga \big) (\tau) \big\|
=
  \cO(\Im(\tau)^a)
\quad\tx{as\ }
  \Im(\tau)\to\infty
\tx{,}
\end{gather*}
then we say that $f$ has moderate growth.

Given a function~$f :\, \HS \ra V(\rho)$, $k \in \ZZ$, and an arithmetic type~$\rho$, we write
\begin{gather*}
  \big( f\big|_{k,\rho}\gamma \big) (\tau)
=
  (c\tau+d)^{-k} \rho(\gamma^{-1}) f(\gamma\tau)
\tx{,}\quad
\tx{where\ }
  \gamma = \begin{psmatrix}a&b\\c&d\end{psmatrix} \in \SL2(\ZZ).
\end{gather*}
A vector-valued modular form of weight $k$ and type $\rho$ is a holomorphic function~$f :\, \HS \ra V(\rho)$ of moderate growth that satisfies $f|_{k,\rho}\gamma=f$ for all~$\ga\in\SL2(\ZZ)$. The space of vector-valued modular forms of type $\rho$ and weight $k$ is denoted by $\rmM_k(\rho)$.

\subsection{Vector-valued modular forms of type \texpdf{$\symd$}{symd}}
\label{ssec:preliminaries:vector_valued_symd}

Let $\sfd$ be a non-negative integer and let $\CC[X]_\sfd$ be the space of polynomials in $X$ of degree at most $\sfd$. Then the $\sfd$\thdash{} symmetric power of the standard representation of $\SL2(\ZZ)$, denoted by $\symd(X)$, is defined by $V(\symd(X))=\CC[X]_\sfd$ and the action
\begin{gather*}
  \symd(X)(\gamma^{-1})p(X)
\;:=\;
  p(X) \big|_{-\sfd} \gamma
=
  (cX+d)^\sfd\, p\big(\mfrac{aX+b}{cX+d}\big)
\tx{,}\quad\tx{where\ }
  \ga=\begin{psmatrix}a&b\\c&d\end{psmatrix}
\tx{.}
\end{gather*}
The~$j$\thdash{} coefficient of~$p \in \CC[X]_\sfd$ will be written as~$p_j$. We have a self-duality given by the following pairing on~$\symd(X) \otimes \symd(X)$:
\begin{gather}
\label{eq:def:symd_self_duality_pairing}
  \langle p, q \rangle
\;:=\;
  \sum_{i=0}^\sfd
  (-1)^{\sfd-i} \mbinom{\sfd}{i}^{-1} p_i q_{\sfd-i}
\tx{.}
\end{gather}
Given a polynomial~$p(\tau) \in \CC_\sfd[X]$, we have $\langle p(X), (X-\tau)^\sfd \rangle = p(\tau)$.

We use the shorthand notation $|_{k,\sfd}$ for $|_{k,\symd(X)}$ and $\rmM_{k,\sfd}$ for $\rmM_k(\symd(X))$. The graded $\rmM_\bullet$\nbd module of vector-valued modular forms of type $\symd$ is given by
\begin{gather*}
  \rmM_{\bullet,\sfd}
\;:=\;
  \bigoplus_{k \in \ZZ} \rmM_{k,\sfd}
\tx{.}
\end{gather*}

The next result is due to Kuga--Shimura~\cite{kuga-shimura-1960}, and we merely rephrase it in our language. Given an integer~$0 \le j \le \sfd$, we consider the increasing filtration of~$\rmM_{k,\sfd}$ by the spaces
\begin{gather}
\label{eq:def:component_vanishing_filtration}
  \rmM_{k,\sfd}[j]
\;:=\;
  \Big\{
    \sum_{r=j}^\sfd (X - \tau)^r f_r \in \rmM_{k,\sfd}
  \;:\;
    f_j :\, \HS \ra \CC
  \Big\}
\end{gather}
of modular forms of type~$\symd$, whose lower~$(X-\tau)^r$\nbd terms vanish. Observe that
\begin{gather*}
  \rmM_{\bullet,\sfd}[j]
\;:=\;
  \bigoplus_{k \in \ZZ}
  \rmM_{k,\sfd}[j]
\end{gather*}
is a module for~$\rmM_{\bullet}$. Therefore the filtration in~\eqref{eq:def:component_vanishing_filtration} yields a filtration of~$\rmM_\bullet$\nbd modules.

\begin{lemma}[See Kuga--Shimura~\cite{kuga-shimura-1960}]
\label{la:projection_to_lowest_symd_component}
Let~$k$ and~$\sfd \ge 0$ be even integers. Then for every integer~$0 \le j \le \sfd$ there is a map
\begin{gather}
\label{eq:la:projection_to_lowest_symd_component}
  \pilow_{k,\sfd,j}
:\,
  \rmM_{k,\sfd}[j]
\lra
  \rmM_{k + 2j - \sfd}
\tx{,}\,
  \sum_{r=j}^{\sfd} (X - \tau)^r f_r
\lmto
  f_{j}
\tx{.}
\end{gather}
The map
\begin{gather*}
  \pilow_{\bullet,\sfd,j}
:\,
  \rmM_{\bullet,\sfd}[j]
\lra
  \rmM_\bullet
\end{gather*}
obtained by applying~\eqref{eq:la:projection_to_lowest_symd_component} in weight~$k$ is a homomorphism of\/~$\rmM_\bullet$\nbd modules.
\end{lemma}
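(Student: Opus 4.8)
The plan is to make the formula in~\eqref{eq:la:projection_to_lowest_symd_component} well-defined first, and then to check module compatibility, which is essentially formal once the target is correctly identified. The first task is to verify that for $f = \sum_{r=j}^{\sfd}(X-\tau)^r f_r \in \rmM_{k,\sfd}[j]$, the lowest coefficient function $f_j :\, \HS \ra \CC$ is actually a holomorphic modular form of weight $k + 2j - \sfd$ and level one. Holomorphy of $f_j$ is clear since $f$ is holomorphic and the functions $(X-\tau)^r$ are a basis of $\CC[X]_\sfd$ over the ring of holomorphic functions on $\HS$; explicitly $f_j$ is obtained from $f$ by a triangular change of basis with holomorphic (indeed polynomial in $\tau$) coefficients. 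Moderate growth of $f_j$ follows likewise from moderate growth of $f$, since expressing $f$ in the monomial basis $X^i$ versus the basis $(X-\tau)^r$ only mixes components by polynomials in $\tau$, and $\Im(\tau)$-power bounds are preserved under such mixing (one should state this cleanly, perhaps by noting that the two bases are related by a matrix in $\GL{\sfd+1}(\CC[\tau])$).

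**The key computation** is the transformation behavior. One writes down how the slash action $|_{k,\sfd}$ acts on the basis element $(X-\tau)^r$. For $\ga = \begin{psmatrix}a&b\\c&d\end{psmatrix} \in \SL2(\ZZ)$ one has
\begin{gather*}
  \symd(X)(\ga^{-1})\big((X-\tau)^r\big)
=
  (cX+d)^\sfd\big(\tfrac{aX+b}{cX+d}-\tau\big)^r
=
  (cX+d)^{\sfd-r}\big((a-c\tau)X + (b-d\tau)\big)^r
\tx{.}
\end{gather*}
Now the crucial point is that $(a-c\tau)X + (b - d\tau) = (a - c\tau)\big(X - \tfrac{d\tau - b}{a - c\tau}\big) = (a-c\tau)(X - \ga^{-1}\tau)$, using $\ga^{-1} = \begin{psmatrix}d&-b\\-c&a\end{psmatrix}$. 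Hence
\begin{gather*}
  \symd(X)(\ga^{-1})\big((X-\tau)^r\big)
=
  (cX+d)^{\sfd-r}(a-c\tau)^r\,(X - \ga^{-1}\tau)^r
\tx{,}
\end{gather*}
and since $(cX+d)^{\sfd-r}$ is a polynomial in $X$ of degree $\sfd - r$, it only contributes to coefficients $(X - \ga^{-1}\tau)^s$ with $s \ge r$ after re-expanding in the shifted basis centered at $\ga^{-1}\tau$. Therefore the lowest-index component is governed purely by the $r = j$ term: equating $(f|_{k,\sfd}\ga)(\tau) = f(\tau)$ and extracting the coefficient of $(X-\tau)^j$ on both sides, and comparing with the expansion of $(f|_{k,\sfd}\ga)(\tau)$ re-centered appropriately, one obtains
\begin{gather*}
  f_j(\tau)
=
  (c\tau+d)^{-k}(a - c\tau)^j\,(c\cdot\ga^{-1}\!... )
\end{gather*}
— more precisely, working it out carefully, the factor $(c\tau+d)^{-k}$ from the weight-$k$ slash combines with $(a-c\tau)^j$ and a factor $(c\,\ga^{-1}\tau + d\,... )$ of the form coming from $(cX+d)^{\sfd-j}$ evaluated suitably; using $a - c\tau = (c\tau+d)^{-1}$ (from $\det\ga = 1$, via $(a-c\tau)(c\tau+d) = \ldots$; in fact $\ga^{-1}$ has lower-left entry $-c$ and one checks $(a-c\tau) = (c\cdot(-\tau)+...)$), the net power of the automorphy factor becomes $-k + 2j - \sfd$, after accounting for the $\sfd - j$ factors of $(cX+d)$ whose constant-in-$X$ part, when the basis is re-centered to $\ga^{-1}\tau$, contributes $(c\,\ga^{-1}\tau + d)^{\sfd-j} = (c\tau+d)^{-(\sfd-j)}$ via the cocycle relation for the automorphy factor. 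The cleanest route is: let $j(\ga,\tau) = c\tau + d$; then $j(\ga^{-1}, \ga\tau) = j(\ga,\tau)^{-1}$, $a - c\cdot\ga\tau = j(\ga,\tau)$ is off by notation, so one should phrase everything in terms of $\ga$ acting and track a single automorphy factor. The upshot is $f_j(\ga^{-1}\tau) = j(\ga^{-1},\ga\tau)^{-(k+2j-\sfd)} f_j(\tau)$ rearranged to $(f_j|_{k+2j-\sfd}\ga) = f_j$, i.e.\ $f_j \in \rmM_{k+2j-\sfd}$.

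**The hard part** is bookkeeping the automorphy factors and the basis change simultaneously without sign or exponent errors — the combinatorics of re-expanding $(cX+d)^{\sfd-r}(X - \ga^{-1}\tau)^r$ in powers of $(X - \ga^{-1}\tau)$ and isolating exactly the degree-$j$ contribution. I expect this to be the only genuine content of the lemma; everything else is formal. Once $\pilow_{k,\sfd,j}$ is shown to land in $\rmM_{k+2j-\sfd}$, the $\rmM_\bullet$-linearity of $\pilow_{\bullet,\sfd,j}$ is immediate: for a scalar modular form $g \in \rmM_\ell$, multiplication sends $\sum_r (X-\tau)^r f_r \mapsto \sum_r (X-\tau)^r (g f_r)$ componentwise in the $(X-\tau)$-basis (since $g$ is scalar and commutes with the basis elements), so the lowest component of $gf$ is $g f_j$; additivity is equally clear. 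Finally one notes $\rmM_{\bullet,\sfd}[j] = \bigoplus_k \rmM_{k,\sfd}[j]$ is indeed an $\rmM_\bullet$-submodule of $\rmM_{\bullet,\sfd}$ because multiplication by a scalar modular form preserves the vanishing of the components of index $< j$ in the $(X-\tau)$-basis, as just observed, so the map is well-defined on the direct sum and $\rmM_\bullet$-linear there.
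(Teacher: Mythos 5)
Your overall strategy coincides with the paper's: the slash action is triangular with respect to the basis $(X-\tau)^r$, with diagonal entry in degree $r$ given by the scalar slash action of weight $k+2r-\sfd$, so the lowest nonvanishing component of an invariant vector is itself modular of weight $k+2j-\sfd$; and $\rmM_\bullet$\nbd linearity is immediate because the module structure is componentwise in this basis. The difference is one of execution: the paper verifies the triangularity only on the generators $T$ and $S$ (a two-line computation in each case), whereas you attempt it for arbitrary $\ga$ --- and that computation, which you yourself identify as ``the only genuine content of the lemma,'' is not actually carried through. The passage beginning ``more precisely, working it out carefully'' rests on the false identity $a-c\tau=(c\tau+d)^{-1}$ (what is true is $a-c(\ga\tau)=(c\tau+d)^{-1}$), and it trails off without ever extracting the coefficient of $(X-\tau)^j$ or verifying that the automorphy factor comes out to the power $-(k+2j-\sfd)$. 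As written, this central step fails.

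The source of the tangle is that you apply $\symd(X)(\ga^{-1})$ to $(X-\tau)^r$, which recenters the basis at $\ga^{-1}\tau$ and therefore no longer matches the basis in which the invariance $f|_{k,\sfd}\ga=f$ is being read off. The clean route is to start from $(f|_{k,\sfd}\ga)(\tau)=(c\tau+d)^{-k}\,\symd(X)(\ga^{-1})f(\ga\tau)$, expand $f(\ga\tau)=\sum_r(X-\ga\tau)^r f_r(\ga\tau)$, and transform the basis element $(X-\ga\tau)^r$ instead: your own formula then yields $\symd(X)(\ga^{-1})\big((X-\ga\tau)^r\big)=(cX+d)^{\sfd-r}(a-c\,\ga\tau)^r(X-\tau)^r$ with $a-c\,\ga\tau=(c\tau+d)^{-1}$, so everything is already recentered at $\tau$. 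Writing $(cX+d)^{\sfd-r}=\big(c(X-\tau)+(c\tau+d)\big)^{\sfd-r}$ and expanding, one sees that this term contributes only to components of index $\ge r$, and that for $f$ with $f_r=0$ for $r<j$ the coefficient of $(X-\tau)^j$ in $(f|_{k,\sfd}\ga)(\tau)$ is exactly $(c\tau+d)^{-(k+2j-\sfd)}f_j(\ga\tau)$, giving $f_j|_{k+2j-\sfd}\,\ga=f_j$. With this one correction your argument closes; alternatively, follow the paper and check only $S$ and $T$, which generate $\SL{2}(\ZZ)$ and for which the bookkeeping is essentially trivial. The remaining points of your write-up (holomorphy and moderate growth of $f_j$, and the componentwise module structure) are fine.
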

\begin{proof}
The second part of the statement follows directly, since the~$\rmM_\bullet$\nbd module structure on~$\rmM_{\bullet,\sfd}$ is given by component-wise multiplication with respect to the basis~$(X - \tau)^r$, $0 \le r \le \sfd$ at~$\tau \in \HS$.

To prove that the given map is well-defined, we have to show that~$f_{j}$ is invariant under the slash action of~$\SL{2}(\ZZ)$ of weight~$k + 2j - \sfd$. Invariance of~$f_{j}$ under the action of~$T$, follows when employing invariance of the left hand side and comparing terms on the left and right hand side of
\begin{gather*}
  \sum_{r = 0}^\sfd (X - \tau)^r f_r
\;=\;
  \Big( \sum_{r = 0}^\sfd (X - \tau)^r f_r \Big) \Big|_{k,\sfd} T
\;=\;
  \sum_{r = 0}^\sfd (X - \tau)^r \big( f_r \big| T \big)
\tx{,}
\end{gather*}
where we have suppressed the weight from the slash action on the right hand side, since the action of~$T$ does not depend on it.

To verify invariance under~$S$, we compute
\begin{align*}
  \sum_{r = 0}^\sfd (X - \tau)^r f_r
\;&{}=\;
  \Big( \sum_{r = 0}^\sfd (X - \tau)^r f_r \Big) \Big|_{k,\sfd} S
\;=\;
  \sum_{r = 0}^\sfd (-1)^r
  ((X - \tau) + \tau)^{\sfd -r} (X - \tau)^r
  \big( f_r \big|_{k+r} S \big)
\\
\;{}&=\;
  \sum_{r = 0}^\sfd (-1)^r
  \sum_{i = 0}^{\sfd-r}
  \mbinom{\sfd-r}{i} 
  (X - \tau)^{r+i}
  \big( f_r \big|_{k+2r+i-\sfd} S \big)
\\
\;{}&=\;
  \sum_{i = 0}^\sfd
  (X - \tau)^i
  \sum_{r = 0}^i (-1)^r
  \mbinom{\sfd-r}{i-r} 
  \big( f_r \big|_{k+r+i-\sfd} S \big)
\tx{.}
\end{align*}
If~$f_r = 0$ for~$r < j$, then comparing the left and right hand side, the term with~$r = i = j$ reveals that~$f_{j} = f_{j} |_{k + j + j - \sfd}\, S$, finishing our proof.
\end{proof}

\section{Generalized second order modular forms}
\label{sec:g2mf}

In this section, we define generalized second order modular forms and show that holomorphic Eichler integrals are generalized second order modular forms. Our generalized second order modular forms are similar to the ``extended second order modular forms'' recently studied by Diamantis~\cite{diamantis-2020-preprint}. 

The next definition features the parabolic cohomology group~$\Hpara(\Ga, \sigma^\vee \otimes \rho)$, which we revisit in Section~\ref{ssec:second_order:cohomology} for completeness.

\begin{definition}
\label{def:generalized_second_order_modular_forms}
Let $\rho$ and $\sigma$ be arithmetic types for a finite index subgroup $\Gamma\subseteq\SL2(\ZZ)$, and let $k$ be an integer. We say that a holomorphic function $f:\, \HS \ra V(\rho)$ of moderate growth is a generalized second order modular form of weight~$k$ and type~$(\rho,\sigma)$ if there exists a finite subset $A \subseteq \Zpara(\Gamma, \sigma^\vee\otimes\rho)$ and a corresponding set of holomorphic modular forms $g_\phi \in \rmM_k(\sigma)$, $\phi \in A$, such that for every $\gamma\in\Gamma$, we have
\begin{gather}
\label{eq:def:generalized_second_order_modular_forms}
  \big( f \big|_{k,\rho} (\gamma-1) \big)(\tau)
\;=\;
  \sum_{\phi\in A}
  \phi\big( \gamma^{-1} \big) \big( g_\phi(\tau) \big)
\tx{.}
\end{gather}
\end{definition}

We write~$\rmM_k^{[1]}(\rho,\sigma)$ for the space of generalized second order modular forms of weight~$k$ and arithmetic type~$(\rho,\sigma)$. We have an inclusion
\begin{gather}
  \rmM_k(\rho)
\;\subseteq\;
  \rmM_k^{[1]}(\rho,\sigma)
\tx{.}
\end{gather}

Generalized second order modular forms of type $(\bbone,\bbone)$ are the same as second order modular forms, introduced by Goldfeld~\cite{goldfeld-1999}. It is possible to view generalized second order modular forms as components of vector-valued modular forms of suitable arithmetic type~\cite{mertens-raum-2021}.

\subsection{Cohomology}
\label{ssec:second_order:cohomology}

Consider arithmetic types~$\rho$ and~$\sigma$. Throughout this work, we identify~$V(\sigma)^\vee \otimes V(\rho)$ with~$\Hom(V(\sigma), V(\rho))$. We denote the group of $1$-coboundaries and $1$-cocycles by
\begin{align*}
  \rmB^1(\SL{2}(\ZZ),\, \rho)
&\;=\;
  \big\{ f : \SL{2}(\ZZ) \to V(\rho),
  \ga \mto \rho(\ga) v - v \,:\,
  v \in V(\rho)
  \big\}
\tx{,}
\\
  \rmZ^1(\SL{2}(\ZZ),\, \rho)
&\;=\;
  \big\{ f : \Gamma \to V(\rho) \,:\,
  \forall \gamma_1,\gamma_2 \in \Gamma \,.\,
  f(\gamma_1\gamma_2) = \rho(\gamma_1) f(\gamma_2) + f(\gamma_1)
  \big\}
\tx{.}
\end{align*}
Cocycles and coboundaries that vanish on all parabolic elements of~$\Gamma$ are called parabolic. We denote the corresponding subgroups by~$\Bpara(\SL{2}(\ZZ), \rho)$ and~$\Zpara(\SL{2}(\ZZ), \rho)$. The quotient
\begin{gather*}
  \Hpara(\SL{2}(\ZZ),\, \rho)
\;:=\;
  \Zpara(\SL{2}(\ZZ),\, \rho) \big\slash \Bpara(\SL{2}(\ZZ),\, \rho)
\end{gather*}
is called the $1$-st parabolic cohomology group with coefficients in~$\rho$. We identify cohomology classes with representatives via a section to the quotient map from cocycles to cohomology classes, that we fix once and for all.

Recall the pairing on~$V(\symd) \otimes V(\symd)$ in~\eqref{eq:def:symd_self_duality_pairing}. Given~$\phi \in \Zpara(\SL{2}(\ZZ), \symd(X))$, we define a ``dual'' cocycle~$\phi^\vee \in \Zpara(\SL{2}(\ZZ), \symd(X)^\vee)$ by
\begin{gather}
\label{eq:def:dual_cocycle}
  \phi^\vee(\ga)(v)
\;:=\;
  \langle \phi(\ga), v \rangle
\tx{,}\qquad
  \phi^\vee(\ga^{-1})
=
  \sum_{i = 0}^\sfd
  (-1)^{\sfd-i} \mbinom{\sfd}{i}^{-1}
  \phi(\ga^{-1})_{\sfd-i}
  (X^i)^\vee
\tx{,}
\end{gather}
where~$X^{\vee\,i}$, $0 \le i \le \sfd$, is the dual basis of~$X^i$, $0 \le i \le \sfd$.

Given a holomorphic function~$f :\, \HS \ra \CC$ and~$k \in \ZZ$, we have a $1$-coboundary
\begin{gather}
\label{eq:def:cocycle_modular_deficit}
  \phi_f(\gamma) \;:=\; f \big|_k (\gamma^{-1} - 1)
\,\in\,
  \rmB^1\big( \SL{2}(\ZZ),\, \cH_k(\HS) \big)
\tx{,}
\end{gather}
where~$\cH_k(\HS)$ is space of holomorphic functions from~$\HS$ to~$\CC$ equipped with the weight-$k$ slash action of~$\SL{2}(\ZZ)$. Note that~$\phi_f$ vanishes if and only if~$f$ is modular invariant of weight~$k$ for $\SL2(\ZZ)$.

\subsection{Eichler integrals}
\label{ssec:eich_int_g2mf}

Given a positive, even integer~$k$, recall the Eichler integral associated with a cusp form~$f \in \rmS_k$:
\begin{gather}
\label{eq:def:eichler_integral}
  \mathcal{E}(f)(\tau)
\;:=\;
  \int_\tau^{i\infty}
  f(z) (\tau-z)^{k-2} \rmd z
\tx{.}
\end{gather}
This integral is absolutely convergent and holomorphic in~$\tau$. The cocycle attached to~$\cE(f)$  is given by
\begin{gather}
\label{eq:cocycle_eichler_integral}
  \phi_{\mathcal{E}(f)}(\ga)
=
  \mathcal{E}(f) \big|_{2-k}
  (\gamma^{-1}-1)
\in
  \Zpara\big( \SL{2}(\ZZ), \sym^{k-2}(\tau) \big)
\tx{.}
\end{gather}
Notice that while it is a co-boundary when viewed as a cocycle with coefficients in~$\cH_{2-k}(\HS)$ as in~\eqref{eq:def:cocycle_modular_deficit}, it is nontrivial in general when viewed as a cocycle with coefficients in~$\sym^{k-2}(\tau)$.

The value of~\eqref{eq:cocycle_eichler_integral} at~$S$ can be described in terms of special~$\rmL$-values for~$f$ (see for example~\cite{kohnen-zagier-1984}):
\begin{gather*}
  \phi_{\mathcal{E}(f)}(S)
=
  \sum_{j=0}^{k-2}
  \tau^{k-2-j}\,
  \frac{(k-2)!}{(k-2-j)!\, (2\pi i)^{j+1}}
  \rmL(f;j+1)\,
\tx{.}
\end{gather*}
A similar consideration shows that for~$\ga = \begin{psmatrix} a & b \\ c & d \end{psmatrix} \in \SL{2}(\ZZ)$, we have
\begin{gather}
\label{eq:cocycle_explicit_expression}
  \phi_{\mathcal{E}(f)}( \ga^{-1} )
=
  \sum_{j = 0}^{k-2}
  \tau^{k-2-j}\,
  \frac{(k-2)!}{(k-2-j)!}
  \sum_{r = 0}^j
  \frac{( d \slash c )^{j-r}}{(j-r)!\, (2 \pi i)^{r+1}}
  \rmL\big(f, \mfrac{-d}{c}; r+1 \big)
\tx{,}
\end{gather}
where the additively twisted~$\rmL$\nbd function of~$f$ is defined via analytic continuation of
\begin{gather*}
  \rmL\big(f, \mfrac{-d}{c}; s \big)
:=
  \sum_{n = 1}^\infty
  e\big(\mfrac{-d}{c}n\big) \frac{c(f;n)}{n^s}
\tx{.}
\end{gather*}

Leveraging the cocycle in~\eqref{eq:cocycle_eichler_integral}, we find that Eichler integrals are generalized second order modular forms. The next proposition can be viewed as a special case of Theorem~3.7 of~\cite{mertens-raum-2021} when using the language of vector-valued modular forms.

\begin{proposition}
\label{prop:eichler_integral}
Given a positive, even integer~$k$, consider a cusp form~$f \in \rmS_k$. Then
\begin{gather*}
  \cE(f)
\in
  \rmM^{[1]}_{2-k}\big( \bbone, \sym^{k-2}(X) \big)
\tx{.}
\end{gather*}
\end{proposition}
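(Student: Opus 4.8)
The plan is to verify the three requirements of Definition~\ref{def:generalized_second_order_modular_forms} with $\Gamma = \SL{2}(\ZZ)$, $\rho = \bbone$, $\sigma = \sym^{k-2}(X)$ and ambient weight $2-k$ (the degree $\sfd = k-2$ of the symmetric power compensating the negative weight). Holomorphy of $\cE(f)$ has already been recorded after~\eqref{eq:def:eichler_integral}. For moderate growth I would bound the defining integral directly: parametrising the ray from $\tau$ to $i\infty$ by $z = \tau + it$ and using the exponential decay $|f(z)| \ll e^{-2\pi\Im z}$ of the cusp form for $\Im z \ge 1$, one obtains $|\cE(f)(\tau)| \ll e^{-2\pi\Im\tau}$ as $\Im\tau \to \infty$; since $\cE(f)\big|_{2-k}\gamma = \cE(f) + \phi_{\cE(f)}(\gamma^{-1})$ and $\phi_{\cE(f)}(\gamma^{-1})$ is, by~\eqref{eq:cocycle_explicit_expression}, a polynomial in $\tau$ of degree at most $k-2$, every translate $\cE(f)\big|_{2-k}\gamma$ is $\cO(\Im(\tau)^{k-2})$, giving moderate growth with exponent $a = k-2$.

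It remains to supply the cocycle data. Since $\sigma^\vee \otimes \rho = \sym^{k-2}(X)^\vee$, I would take $A = \{\phi_{\cE(f)}^\vee\}$, the dual cocycle (in the sense of~\eqref{eq:def:dual_cocycle}) of the parabolic cocycle $\phi_{\cE(f)} \in \Zpara(\SL{2}(\ZZ), \sym^{k-2}(\tau))$ from~\eqref{eq:cocycle_eichler_integral}, after the harmless renaming of the variable $\tau$ to $X$ that identifies $\sym^{k-2}(\tau)$ with $\sym^{k-2}(X)$; by~\eqref{eq:def:dual_cocycle} this is again a parabolic cocycle, now with values in $\sym^{k-2}(X)^\vee$. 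As companion modular form I would take the single element $g := (X - \tau)^{k-2}$, and verify that $g \in \rmM_{2-k}(\sym^{k-2}(X))$: it is holomorphic with polynomially bounded coefficients, and a direct check with the cross-ratio identity $\tfrac{aX+b}{cX+d} - \gamma\tau = \tfrac{X-\tau}{(cX+d)(c\tau+d)}$ (where $\gamma = \begin{psmatrix}a&b\\c&d\end{psmatrix}$, valid since $\det\gamma = 1$) shows $g\big|_{2-k,\,k-2}\gamma = g$ for all $\gamma \in \SL{2}(\ZZ)$.

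Finally I would match the two sides of~\eqref{eq:def:generalized_second_order_modular_forms}. On the left, substituting $\gamma^{-1}$ into~\eqref{eq:def:cocycle_modular_deficit}/\eqref{eq:cocycle_eichler_integral} gives $\cE(f)\big|_{2-k}(\gamma - 1) = \phi_{\cE(f)}(\gamma^{-1})$, read as a polynomial function of $\tau$. On the right, by~\eqref{eq:def:dual_cocycle} and the identity $\langle p(X), (X - \tau)^{\sfd}\rangle = p(\tau)$ recorded after~\eqref{eq:def:symd_self_duality_pairing} with $\sfd = k-2$, we have $\phi_{\cE(f)}^\vee(\gamma^{-1})\big(g(\tau)\big) = \langle \phi_{\cE(f)}(\gamma^{-1}), (X - \tau)^{k-2}\rangle = \phi_{\cE(f)}(\gamma^{-1})$, again as a function of $\tau$. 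These agree for every $\gamma \in \SL{2}(\ZZ)$, so~\eqref{eq:def:generalized_second_order_modular_forms} holds with the data above, and hence $\cE(f) \in \rmM^{[1]}_{2-k}(\bbone, \sym^{k-2}(X))$. I expect the only genuine subtlety to be keeping the two incarnations of the Eichler cocycle apart — as a polynomial-in-$\tau$-valued map versus as a $\sym^{k-2}(X)^\vee$-valued cocycle — which is precisely what the self-duality pairing~\eqref{eq:def:symd_self_duality_pairing} reconciles; the moderate-growth estimate and the modularity of $(X - \tau)^{k-2}$ are routine, so there is no substantial obstacle.
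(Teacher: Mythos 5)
Your proposal is correct and follows essentially the same route as the paper's own proof: take $A=\{\phi_{\cE(f)}^\vee\}$ and $g=(X-\tau)^{k-2}$, match the two sides of~\eqref{eq:def:generalized_second_order_modular_forms} via the identity $\langle p(X),(X-\tau)^{k-2}\rangle=p(\tau)$, and note holomorphy and moderate growth. The extra details you supply (the decay estimate for $\cE(f)$ and the $\SL{2}(\ZZ)$-invariance of $(X-\tau)^{k-2}$) are consistent with, and slightly more explicit than, the paper's one-line treatment of those points.
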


\begin{proof}
We write~$\phi$ for the cocycle attached to~$\cE(f)$ in~\eqref{eq:cocycle_eichler_integral} and~$\phi^\vee$ for its dual defined in~\eqref{eq:def:dual_cocycle}. In Definition~\ref{def:generalized_second_order_modular_forms}, we set~$A = \{\phi^\vee\}$ and~$g_{\phi^\vee} = (X - \tau)^{k-2}$. Rephrasing~\eqref{eq:cocycle_eichler_integral}, we have
\begin{gather*}
  \big( \cE(f) \big|_{2-k} (\ga - 1) \big)(\tau)
=
  \phi(\ga^{-1})(\tau)
\tx{.}
\end{gather*}
To verify~\eqref{eq:def:generalized_second_order_modular_forms}, we have to see that
\begin{gather}
\label{eq:prop:eichler_integral:cocycle}
\phi(\ga^{-1})(\tau)
=
  \phi^\vee(\ga^{-1}) \big( (X - \tau)^{k-2} \big)
\tx{,}
\end{gather}
which follows directly from the definition of~$\phi^\vee$ in~\eqref{eq:def:dual_cocycle} and the relation stated after~\eqref{eq:def:symd_self_duality_pairing}.

Since~$(X - \tau)^{k-2}$ has moderate growth and~$\cE(f)(\tau) \to 0$ as $\tau \to i\infty$, we see that~$\cE(f)$ is of moderate growth. Since~$\cE(f)$ is holomorphic, we have finished the proof.
\end{proof}

\section{Eisenstein series}
\label{sec:eis}

In this section we introduce generalized second order Eisenstein series, which yield generalized second order modular forms in the sense of Definition~\ref{def:generalized_second_order_modular_forms}. In particular, they are associated with a pair of arithmetic types~$(\rho,\sigma)$. The construction is based on a parabolic $1$-cocycle for~$\sigma^\vee \otimes \rho$. We determine the Fourier series expansions in the special cases~$(\rho,\sigma)=(\bbone,\symd(X))$ and~$(\rho,\sigma)=(\symd(X),\bbone)$.

We start with vector-valued Eisenstein series of type~$\symd$ for a nonnegative integer~$\sfd$. If~$\sfd = 0$, it equals the classical Eisenstein series~$E_k$ of weight~$k$. Given integers~$k > 2 + \sfd$ and~$0 \le j \le \sfd$ with $k \equiv \sfd \,\pmod{2}$ we define
\begin{gather}
\label{eq:def:eisenstein_series_symd}
  E_k(\tau;\sfd,j)
\;:=\;
  \sum_{\gamma \in \Gamma_\infty \backslash \SL2(\ZZ)}
  (X - \tau)^j \big|_{k,\symd(X)} \gamma
\tx{.}
\end{gather}

The definition of generalized second order Eisenstein series requires the group of real unipotent upper triangular matrices:
\begin{gather*}
  \SL{2}(\RR)_\infty
\;:=\;
  \big\{
    \pm
    \begin{psmatrix}
      1 & b
    \\
      0 & 1
    \end{psmatrix}
    \in
    \SL{2}(\RR)
  \; : \;
    b \in \RR
  \big\}
\tx{.}
\end{gather*}
Note that~$(X-\tau)^j$ in~\eqref{eq:def:eisenstein_series_symd} is invariant under the slash action of~$\SL{2}(\RR)_\infty$.

\begin{definition}
\label{def:second_order_eisenstein_series}
Let $k$ be an integer, $\rho$ and $\sigma$ be arithmetic types, and~$\phi$ be a parabolic $1$-cocycle, i.e., $\phi \in \Zpara(\SL2(\ZZ), \sigma^\vee \otimes \rho)$. Assume that the restriction of~$\sigma$ to~$\Ga_\infty$ admits an extension, say~$\sigma_\infty$, to~$\SL{2}(\RR)_\infty$. Consider a smooth function~$f:\HS \to V(\sigma)$ that is covariant with respect to real translations:
\begin{gather*}
  \forall g \in \SL{2}(\RR)_\infty \,:\,
  f \big| g
=
  \sigma_\infty(g)
  f
\tx{.}
\end{gather*}
Assuming absolute and locally uniform convergence of the right hand side, the generalized second order Eisenstein series of weight $k$ and type $(\rho,\sigma)$ associated to $(\phi,f)$ is defined as
\begin{gather}
\label{eq:def:second_order_eisenstein_series}
  E_k^{[1]}(\tau;\, \phi,f)
:=
  \sum_{\gamma \in \Gamma_\infty \backslash \SL2(\ZZ)}
  \phi( \gamma^{-1} ) \Big(
  \big( f \big|_{k,\sigma} \gamma \big)(\tau)
  \Big)
\tx{.}
\end{gather}
\end{definition}

\begin{remark}
The Eisenstein series in~\eqref{eq:def:second_order_eisenstein_series} is a component of a suitable vector-valued Eisenstein series~\cite{mertens-raum-2021}.
\end{remark}

A usual argument via reordering summation yields the next statement.
\begin{proposition}
\label{prop:g2es_is_g2mf}
Let~$k$, $\rho$, $\sigma$, $\phi$, and~$f$ be as in Definition~\ref{def:second_order_eisenstein_series}. Assume that~\eqref{eq:def:second_order_eisenstein_series} and the vector-valued Eisenstein series~$\sum f |_{k,\sigma} \ga$ of type~$\sigma$ converge absolutely and locally uniformly. If~$f$ is holomorphic and ~$E_k^{[1]}(\tau;\,\phi,f)$ has moderate growth, then we have
\begin{gather}
  E_k^{[1]}(\tau;\,\phi,f)
\in
  \rmM_k^{[1]}(\rho,\sigma)
\tx{,}
\end{gather}
\end{proposition}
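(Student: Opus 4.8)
The plan is to verify, for $E := E_k^{[1]}(\tau;\,\phi,f)$, the three defining properties in Definition~\ref{def:generalized_second_order_modular_forms}; only the cocycle identity for the modular deficit requires genuine work. Holomorphy of $E$ is immediate, since $f$ is holomorphic, $\phi(\gamma^{-1})$ is a fixed linear map, and the series converges locally uniformly; moderate growth of $E$ is assumed. Before anything else I would record that the summand $\phi(\gamma^{-1})\big((f|_{k,\sigma}\gamma)(\tau)\big)$ descends to $\Gamma_\infty\backslash\SL2(\ZZ)$: replacing $\gamma$ by $\gamma_0\gamma$ with $\gamma_0\in\Gamma_\infty$ leaves $\phi(\gamma^{-1})$ unchanged because $\phi$ is parabolic, and leaves $f|_{k,\sigma}\gamma$ unchanged because the covariance of $f$ together with $\sigma_\infty|_{\Gamma_\infty}=\sigma|_{\Gamma_\infty}$ forces $f|_{k,\sigma}\gamma_0=f$. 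The same observation shows that the vector-valued Eisenstein series $V:=\sum_{\gamma\in\Gamma_\infty\backslash\SL2(\ZZ)}f|_{k,\sigma}\gamma$ of type $\sigma$ appearing in the hypothesis is well defined, holomorphic, and $|_{k,\sigma}$-invariant under $\SL2(\ZZ)$ by the same reindexing; together with the standard moderate-growth estimate for absolutely convergent Eisenstein series this places $V$ in $\rmM_k(\sigma)$.

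The heart of the argument is a reordering computation for $E|_{k,\rho}(\delta-1)$. For $\delta=\begin{psmatrix}a&b\\c&d\end{psmatrix}\in\SL2(\ZZ)$ I would start from $(E|_{k,\rho}\delta)(\tau)=(c\tau+d)^{-k}\rho(\delta^{-1})E(\delta\tau)$, substitute termwise the identity $(f|_{k,\sigma}\gamma)(\delta\tau)=(c\tau+d)^{k}\sigma(\delta)\big((f|_{k,\sigma}(\gamma\delta))(\tau)\big)$, and reindex the absolutely convergent sum by $\eta=\gamma\delta$, which permutes $\Gamma_\infty\backslash\SL2(\ZZ)$. This yields $(E|_{k,\rho}\delta)(\tau)=\rho(\delta^{-1})\sum_{\eta}\phi(\delta\eta^{-1})\big(\sigma(\delta)(f|_{k,\sigma}\eta)(\tau)\big)$. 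Applying the cocycle relation $\phi(\delta\eta^{-1})=(\sigma^\vee\otimes\rho)(\delta)\,\phi(\eta^{-1})+\phi(\delta)$ and unwinding the conjugation action of $\sigma^\vee\otimes\rho$ on $\Hom(V(\sigma),V(\rho))$, the first term becomes $\rho(\delta)\big(\phi(\eta^{-1})((f|_{k,\sigma}\eta)(\tau))\big)$, whose sum over $\eta$ — after applying the outer $\rho(\delta^{-1})$ — equals $E(\tau)$, while the second term contributes $\big(\rho(\delta^{-1})\circ\phi(\delta)\circ\sigma(\delta)\big)(V(\tau))=(\sigma^\vee\otimes\rho)(\delta^{-1})(\phi(\delta))\big(V(\tau)\big)$. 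Feeding $1=\delta^{-1}\delta$ into the cocycle relation and using $\phi(1)=0$ rewrites this last expression as $-\phi(\delta^{-1})(V(\tau))$. Hence $\big(E|_{k,\rho}(\delta-1)\big)(\tau)=-\phi(\delta^{-1})(V(\tau))$ for all $\delta\in\SL2(\ZZ)$.

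To finish, I take the finite set $A=\{-\phi\}\subseteq\Zpara\big(\SL2(\ZZ),\sigma^\vee\otimes\rho\big)$ — note that $-\phi$ is again a parabolic cocycle — together with $g_{-\phi}:=V\in\rmM_k(\sigma)$; then the identity just derived is precisely \eqref{eq:def:generalized_second_order_modular_forms}, so $E\in\rmM_k^{[1]}(\rho,\sigma)$. The steps I expect to be delicate are the bookkeeping in the reordering argument — in particular keeping the direction of the cocycle relation and the $\Hom$-action of $\sigma^\vee\otimes\rho$ mutually consistent, which is what determines the sign — and, secondarily, confirming that $V$ really lies in $\rmM_k(\sigma)$, its moderate growth being the only non-formal point. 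Everything else is the familiar reordering bookkeeping.
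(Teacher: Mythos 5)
Your proposal is correct and follows essentially the same route as the paper: apply $|_{k,\rho}\delta$ to the series, reorder the absolutely convergent sum, invoke the cocycle relation for $\phi$, and identify the modular deficit as $-\phi(\delta^{-1})$ evaluated on the type-$\sigma$ vector-valued Eisenstein series, which is then taken as the modular form $g$ in Definition~\ref{def:generalized_second_order_modular_forms}. The only difference is cosmetic (you reindex before applying the cocycle relation, the paper does it after), and your extra checks on well-definedness over $\Gamma_\infty$-cosets and on the sign via $A=\{-\phi\}$ are points the paper leaves implicit.
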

\begin{proof}
As a locally uniform limit of holomorphic functions, $E_k^{[1]}(\cdotop;\, \phi, f)$ is holomorphic. Given~$\delta \in \SL{2}(\ZZ)$, continuity and the cocycle relation for~$\phi$ yield
\begin{align*}
&
  \sum_{\gamma \in \Gamma_\infty \backslash \SL2(\ZZ)}
  \phi( \gamma^{-1} ) \Big(
  f \big|_{k,\sigma} \gamma
  \Big) \big|_{k,\rho} \delta
\;=\;
  \sum_{\gamma \in \Gamma_\infty \backslash \SL2(\ZZ)}
  \rho(\delta^{-1})
  \phi( \gamma^{-1} ) \Big(
  \sigma(\ga^{-1}) f \big|_k \gamma \delta
  \Big)
\\
=\;&
  \sum_{\gamma \in \Gamma_\infty \backslash \SL2(\ZZ)}
  \phi( \delta^{-1} \gamma^{-1} ) \Big(
  \sigma(\delta^{-1}) \sigma(\ga^{-1}) f \big|_k \gamma \delta
  \Big)
  \,-\,
  \sum_{\gamma \in \Gamma_\infty \backslash \SL2(\ZZ)}
  \phi( \delta^{-1} ) \Big(
  \sigma(\delta^{-1})
  \sigma(\ga^{-1}) f \big|_k \gamma \delta
  \Big)
\\
=\;&
  \sum_{\gamma \in \Gamma_\infty \backslash \SL2(\ZZ)}
  \phi( \gamma^{-1} ) \Big(
  \sigma(\ga^{-1}) f \big|_k \gamma
  \Big)
  \,-\,
  \phi( \delta^{-1} ) \Bigg(
  \sum_{\gamma \in \Gamma_\infty \backslash \SL2(\ZZ)}
  \sigma(\delta^{-1})
  \sigma(\ga^{-1}) f \big|_k \gamma \delta
  \Bigg)
\tx{.}
\end{align*}
The contribution of~$\delta$ in the argument of the second term can be discarded by modular invariance, thus finishing the proof.
\end{proof}

Given even integers~$k$ and~$\sfd \geq 0$, and an integer~$0 \leq j \leq \sfd$, the constant functions and the functions~$(X-\tau)^j$ satisfy the condition on~$f$ in Definition~\ref{def:second_order_eisenstein_series} for the arithmetic types~$\sigma = \bbone$ and~$\sigma = \symd(X)$, respectively. By Proposition~5.4 of~\cite{mertens-raum-2021} the associated generalized second order Eisenstein series are defined for~$k \ge 5 + \sfd$ and yield functions of moderate growth, i.e., generalized second order modular forms. Note that one can improve this bound when employing convexity of additively twisted $\rmL$-functions as in Lemma~\ref{eq:la:cocycle_estimate} or in~\cite{diamantis-2020-preprint}. We use the following shorthand notation:
\begin{gather}
\label{eq:def:second_order_eisenstein_series_sym_cases}
\begin{alignedat}{2}
  E_k^{[1]}(\tau;\phi)
&\;:=\;
  E_k^{[1]}(\tau;\phi,1)
\tx{,}\quad
&&
  \text{if\ }(\rho,\sigma)=(\symd(X),\bbone)
\tx{;}
\\
  E_k^{[1]}(\tau;\phi,j)
&\;:=\;
  E_k^{[1]}(\tau;\phi,(X-\tau)^j)
\tx{,}\quad
&&
  \text{if\ }(\rho,\sigma)=(\bbone,\symd(X))
\tx{.}
\end{alignedat}
\end{gather}

\subsection{Fourier series expansion of~\texpdf{$E_k(\cdotop; \sfd, j)$}{Ekdj}}

Note that the function~$X - \tau$ is invariant under the slash action of any weight~$k$ and type~$\symd$, $\sfd \ge 0$. In particular, holomorphic modular forms of type~$\symd$ for~$\SL{2}(\ZZ)$, including these Eisenstein series, afford a Fourier series expansion of the form
\begin{gather}
\label{eq:symd_fourier_series}
  \sum_{r = 0}^{\sfd}
  (X - \tau)^r\,
  \sum_{n = 0}^\infty c(n)_r e(n \tau)
\tx{,}\quad
  c(n)_r \in \CC
\tx{.}
\end{gather}
The next proposition identifies the Fourier coefficients in case of the Eisenstein series of type~$\symd$. Its Corollary~\ref{cor:eisenstein_series_symd_filtration_steps} is instrumental in Section~\ref{sec:saturation}.

\begin{proposition}
\label{prop:eisenstein_series_symd_fourier_expansion}
Given even integers~$\sfd \ge 0$ and~$k > 2 + \sfd$, and an integer~$0 \le j \le \sfd$, we have
\begin{gather}
\label{eq:prop:eisenstein_series_symd_fourier_expansion}
  E_k(\tau;\, \sfd, j)
\;=\;
  (X - \tau)^j
  +
  \sum_{r = j}^{\sfd}
  (X - \tau)^r\,
  \sum_{n = 1}^\infty c(n)_r e(n \tau)
\end{gather}
with Fourier coefficients for~$j \le r \le \sfd$ and~$n \ge 1$:
\begin{gather*}
  c(n)_r
\;=\;
  \mbinom{\sfd-j}{r-j}
  \frac{(-2\pi i)^{k+j+r-\sfd}}{(k+j+r-\sfd-1)!\, \zeta(k+2j-\sfd)}\,
  n^{r - j} \sigma_{k+2j-\sfd-1}(n)
\tx{.}
\end{gather*}
\end{proposition}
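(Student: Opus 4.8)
The plan is to unfold the definition~\eqref{eq:def:eisenstein_series_symd} along the coset decomposition of~$\Gamma_\infty\backslash\SL2(\ZZ)$ and apply Lipschitz's summation formula, mirroring the classical computation of the Fourier expansion of~$E_k$. First I would compute, for~$\gamma=\begin{psmatrix}a&b\\c&d\end{psmatrix}$, the polynomial~$(X-\tau)^j\big|_{k,\symd(X)}\gamma$ explicitly: using~$ad-bc=1$ one finds $\frac{aX+b}{cX+d}-\gamma\tau=\frac{X-\tau}{(cX+d)(c\tau+d)}$, hence
\begin{gather*}
  (X-\tau)^j\big|_{k,\symd(X)}\gamma
  \;=\;
  (cX+d)^{\sfd-j}\,(c\tau+d)^{-k-j}\,(X-\tau)^j
\tx{.}
\end{gather*}
Expanding~$(cX+d)^{\sfd-j}=\sum_{i=0}^{\sfd-j}\binom{\sfd-j}{i}c^i(X-\tau)^i(c\tau+d)^{\sfd-j-i}$ and reindexing by~$r=i+j$, the coefficient of~$(X-\tau)^r$ in~$E_k(\tau;\sfd,j)$ becomes
\begin{gather*}
  \binom{\sfd-j}{r-j}
  \sum_{\gamma\in\Gamma_\infty\backslash\SL2(\ZZ)}
  c^{\,r-j}\,(c\tau+d)^{\sfd-k-j-r}
\tx{.}
\end{gather*}
The identity coset ($c=0$) contributes only to~$r=j$, producing the leading term~$(X-\tau)^j$ with coefficient~$1$; this is the first summand in~\eqref{eq:prop:eisenstein_series_symd_fourier_expansion}.

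Next I would sum the non-identity cosets, which are parametrized by coprime pairs~$(c,d)$ with~$c\ge1$. Writing~$d=d_0+cm$ with~$d_0$ running through residues mod~$c$ coprime to~$c$ and~$m\in\ZZ$, and putting~$\kappa:=k+j+r-\sfd$, Lipschitz's formula yields
\begin{gather*}
  \sum_{m\in\ZZ}(c\tau+d_0+cm)^{-\kappa}
  \;=\;
  c^{-\kappa}\,\frac{(-2\pi i)^{\kappa}}{(\kappa-1)!}
  \sum_{n=1}^{\infty}n^{\kappa-1}\,e\big(n\tfrac{d_0}{c}\big)\,e(n\tau)
\tx{.}
\end{gather*}
Summing over~$d_0$ produces the Ramanujan sum~$c_c(n)=\sum_{d_0}e(nd_0/c)$, and summing over~$c$ invokes the identity $\sum_{c\ge1}c_c(n)c^{-s}=\sigma_{1-s}(n)/\zeta(s)=n^{1-s}\sigma_{s-1}(n)/\zeta(s)$ with~$s=\kappa-(r-j)=k+2j-\sfd$. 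Collecting the powers of~$n$ via~$n^{\kappa-1}\cdot n^{1-s}=n^{r-j}$ and recalling~$\kappa=k+j+r-\sfd$ then gives precisely
\begin{gather*}
  c(n)_r
  \;=\;
  \binom{\sfd-j}{r-j}
  \frac{(-2\pi i)^{k+j+r-\sfd}}{(k+j+r-\sfd-1)!\,\zeta(k+2j-\sfd)}
  \,n^{r-j}\,\sigma_{k+2j-\sfd-1}(n)
\tx{.}
\end{gather*}

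The only substantive point is to justify the rearrangements, i.e.\ to show that $\sum_{(c,d)}c^{r-j}|c\tau+d|^{-\kappa}$ converges absolutely and locally uniformly. Since~$\kappa-(r-j)=k+2j-\sfd>2$ by the hypothesis~$k>2+\sfd$, this is the standard Eisenstein convergence estimate; it also guarantees~$\kappa\ge k-\sfd>2$, so Lipschitz's formula applies, and that~$\zeta(k+2j-\sfd)\ne0$, so no degenerate denominators arise. Everything else is bookkeeping with binomial coefficients and exponents.
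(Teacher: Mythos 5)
Your proof is correct and follows essentially the same route as the paper: unfold the coset sum, compute $(X-\tau)^j\big|_{k,\sfd}\,\gamma$ explicitly, expand binomially in powers of $(X-\tau)$, and apply the Lipschitz summation formula, with convergence secured by $k+2j-\sfd>2$. The only (immaterial) difference is the final arithmetic step: you keep the coprimality condition on $(c,d)$ and invoke the Ramanujan-sum identity $\sum_{c\ge1}c_c(n)\,c^{-s}=\sigma_{1-s}(n)\slash\zeta(s)$, whereas the paper inserts the factor $\zeta(k+2j-\sfd)\,\zeta(k+2j-\sfd)^{-1}$ to drop coprimality and then obtains the divisor sum from the substitution $n\mapsto cn$ --- the two devices are equivalent.
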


\begin{corollary}
\label{cor:eisenstein_series_symd_filtration_steps}
For even integers~$\sfd \ge 0$ and~$k > 2 + \sfd$, and for an integer~$0 \le j \le \sfd$, we have
\begin{gather*}
  E_k(\,\cdot\,;\, \sfd, j)
\in
  \rmM_{k,\sfd}[j]
\tx{.}
\end{gather*}
The map~$\pilow_{k,\sfd,j}$ from~$\rmM_{k,\sfd}[j]$ to~$\rmM_{k + 2j - \sfd}$ sends it to the usual Eisenstein series~$E_{k + 2j - \sfd}$.
\end{corollary}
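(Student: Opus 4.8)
The plan is to extract both assertions directly from the Fourier expansion established in Proposition~\ref{prop:eisenstein_series_symd_fourier_expansion}. First I would record that $E_k(\,\cdot\,;\sfd,j)$ is a holomorphic vector-valued modular form of weight~$k$ and type~$\symd$: this follows from absolute and locally uniform convergence of the defining sum~\eqref{eq:def:eisenstein_series_symd} for $k > 2 + \sfd$ via the standard Eisenstein series estimates, and is already noted in the discussion preceding Proposition~\ref{prop:eisenstein_series_symd_fourier_expansion}. Hence $E_k(\,\cdot\,;\sfd,j) \in \rmM_{k,\sfd}$, and it admits a Fourier expansion of the shape~\eqref{eq:symd_fourier_series}.

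Next I would compare~\eqref{eq:prop:eisenstein_series_symd_fourier_expansion} with the definition~\eqref{eq:def:component_vanishing_filtration} of the filtration step $\rmM_{k,\sfd}[j]$. The right-hand side of~\eqref{eq:prop:eisenstein_series_symd_fourier_expansion} is a linear combination of the basis vectors $(X-\tau)^r$ with $r \ge j$ only: the constant term contributes $(X-\tau)^j$, and the remaining terms range over $j \le r \le \sfd$. Thus, in the notation of~\eqref{eq:def:component_vanishing_filtration}, the coefficient functions $f_r$ vanish for $r < j$, which is exactly the defining condition for membership in $\rmM_{k,\sfd}[j]$. This gives $E_k(\,\cdot\,;\sfd,j) \in \rmM_{k,\sfd}[j]$.

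For the second assertion I would apply $\pilow_{k,\sfd,j}$, which by Lemma~\ref{la:projection_to_lowest_symd_component} reads off the coefficient $f_j$ of $(X-\tau)^j$. By~\eqref{eq:prop:eisenstein_series_symd_fourier_expansion} this coefficient is $f_j(\tau) = 1 + \sum_{n\ge 1} c(n)_j\, e(n\tau)$, and specializing the formula for $c(n)_r$ to $r = j$ (so that $\binom{\sfd-j}{r-j} = 1$ and $n^{r-j} = 1$) yields $c(n)_j = \frac{(-2\pi i)^m}{(m-1)!\,\zeta(m)}\,\sigma_{m-1}(n)$, where $m := k + 2j - \sfd$. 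Since $\sfd$ is even, $k \equiv \sfd \pmod 2$ and $k > 2 + \sfd$, the integer $m$ is even with $m \ge 4$, so $E_m$ is the classical normalized Eisenstein series of weight~$m$. Using $\zeta(m) = -\frac{(2\pi i)^m B_m}{2\, m!}$ one checks $\frac{(-2\pi i)^m}{(m-1)!\,\zeta(m)} = -\frac{2m}{B_m}$, which is precisely the $n$-th Fourier coefficient of $E_m$; since $E_m$ also has constant term~$1$ and $f_j \in \rmM_m$ by Lemma~\ref{la:projection_to_lowest_symd_component}, comparison of Fourier expansions gives $f_j = E_{k+2j-\sfd}$, i.e., $\pilow_{k,\sfd,j}\big(E_k(\,\cdot\,;\sfd,j)\big) = E_{k+2j-\sfd}$.

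The argument is pure bookkeeping once Proposition~\ref{prop:eisenstein_series_symd_fourier_expansion} is available, so there is no genuine obstacle; the only place where a little care is needed is matching the normalization constant $\frac{(-2\pi i)^m}{(m-1)!\,\zeta(m)}$ appearing in $c(n)_j$ with the familiar coefficient $-2m/B_m$ of $E_m$, which amounts to the classical evaluation of $\zeta$ at even integers.
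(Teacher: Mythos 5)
Your proposal is correct and matches the paper's intent exactly: the corollary is left as an immediate consequence of Proposition~\ref{prop:eisenstein_series_symd_fourier_expansion}, with membership in $\rmM_{k,\sfd}[j]$ read off from the absence of $(X-\tau)^r$ terms with $r<j$ and the image under $\pilow_{k,\sfd,j}$ identified with $E_{k+2j-\sfd}$ by comparing Fourier coefficients. Your verification of the normalization constant $\frac{(-2\pi i)^m}{(m-1)!\,\zeta(m)}=-\frac{2m}{B_m}$ is the only computation needed, and it checks out.
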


\begin{proof}%
[Proof of Proposition~\ref{prop:eisenstein_series_symd_fourier_expansion}]
Given $\gamma = \begin{psmatrix} a & b \\ c & d \end{psmatrix} \in \SL{2}(\ZZ)$ with $c \neq 0$, we find that $(X - \tau)^j |_{k,\sfd}\,\gamma$ equals
\begin{align*}
&
  (c X + d)^{\sfd-j}(c \tau + d)^{-k-j}\,
  \big( (a X + b) (c \tau + d) - (a \tau + b) (c X + d) \big)^j
\\
=\;{}&
  \frac{1}{c^{k+2j-\sfd}}\,
  \big((X - \tau) + (\tau + \tfrac{d}{c}) \big)^{\sfd-j}\,
  \big(\tau + \tfrac{d}{c}\big)^{-k-j}\,
  \big( (ac - ac) X \tau + (a d - b c) (X - \tau) + (bd - bd) \big)^j
\\
=\;{}&
  \frac{1}{c^{k+2j-\sfd}}\,
  \sum_{r = 0}^{\sfd-j}
  \mbinom{\sfd-j}{r}
  \frac{(X - \tau)^{j+r}}
       {(\tau + \frac{d}{c})^{k+j-(\sfd-j-r)}}
\;=\;
  \frac{1}{c^{k+2j-\sfd}}\,
  \sum_{r = j}^{\sfd}
  \mbinom{\sfd-j}{r-j}
  \frac{(X - \tau)^r}
       {(\tau + \frac{d}{c})^{k+j+r-\sfd}}
\tx{.}
\end{align*}

To apply the Lipschitz summation formula in the usual way, we extend~$E_k(\tau;\, \sfd, j)$ by the auxiliary factor~$\zeta(k+2j-d)\,\zeta(k+2j-d)^{-1}$ and split the defining summation over~$\Ga_\infty \backslash \SL{2}(\ZZ)$ into a sum over double cosets in~$\Ga_\infty \backslash \SL{2}(\ZZ) \slash \Ga_\infty$ with bottom entries~$c \in \ZZ$ and~$d \pmod{c}$ and over~$m \in \ZZ$. We thus find that~$E_k(\tau;\, \sfd, j)$ equals
\begin{align*}
&\hphantom{{}=\;}
  (X - \tau)^j
\,+\,
  \frac{1}{\zeta(k+2j-\sfd)}
  \sum_{r = j}^{\sfd}
  (X - \tau)^r\,
  \mbinom{\sfd-j}{r-j}
  \sum_{c = 1}^\infty
  \frac{1}{c^{k+2j-\sfd}}\,
  \sum_{\substack{d \pmod{c}\\m \in \ZZ}}
  \frac{1}
       {(\tau + \frac{d}{c} + m)^{k+j+r-\sfd}}
\\
&{}=
  (X - \tau)^j
\,+\,
  \frac{1}{\zeta(k+2j-\sfd)}
  \sum_{r = j}^{\sfd}
  (X - \tau)^r\,
  \mbinom{\sfd-j}{r-j}
  \sum_{c = 1}^\infty
  \frac{1}{c^{k+2j-\sfd}}\,
\\
&
  \hphantom{{}= (X - \tau)^j \,+\,}\qquad\qquad
  \sum_{d \pmod{c}}
  \frac{(-2\pi i)^{k+j+r-\sfd}}{(k+j+r-\sfd-1)!}
  \sum_{n = 1}^\infty
  n^{k+j+r-\sfd-1}
  e\big( n (\tau + \tfrac{d}{c})\big)
\\
&{}=
  (X - \tau)^j
\,+\,
  \sum_{r = j}^{\sfd}
  (X - \tau)^r\,
  \mbinom{\sfd-j}{r-j}
  \frac{(-2\pi i)^{k+j+r-\sfd}}{(k+j+r-\sfd-1)!\, \zeta(k+2j-\sfd)}
\\
&
  \hphantom{{}= (X - \tau)^j \,+\,
            \sum_{r = j}^{\sfd} (X - \tau)^r\,
	    \mbinom{\sfd-j}{r-j}}\qquad\qquad
  \sum_{c = 1}^\infty
  \sum_{n = 1}^\infty
  c^{r-j} n^{k+j+r-\sfd-1}
  e(c n \tau)
\tx{.}
\end{align*}
\end{proof}

\subsection{Evaluation of~\texpdf{$E_k(\cdotop; \sfd, j)$}{Ekdj}}

We use the Fourier series expansions of~$E_k(\cdotop; \sfd, j)$ to evaluate it at points~$\tau \in \HS$. This requires the following tail estimate.

\begin{lemma}
\label{la:errorbound_ekd_partial}
Given~$\tau = x + iy \in \HS$, integers~$a \ge 0$ and~$b > 0$, and an integer~$N \ge 1 + (a+b) \slash 2 \pi y$ we have
\begin{gather}
\label{eq:prop:errorbound_ekd_partial}
  \bigg|
    \sum_{n = N}^\infty n^a \sigma_b(n) e(n \tau)
  \bigg|
\;\le\;
  \frac{b\, \Ga(a+b+1,\, 2 \pi (N-1) y)}{(b-1)\, (2 \pi y)^{a+b+1}}
\tx{.}
\end{gather}
\end{lemma}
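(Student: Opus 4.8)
The plan is to reduce the claimed bound to an elementary comparison between a Dirichlet-type series and an explicit integral. First I would apply the triangle inequality together with $|e(n\tau)| = e^{-2\pi n y}$, valid for $\tau = x + iy \in \HS$, so that it suffices to bound the series of nonnegative terms $\sum_{n \ge N} n^{a}\, \sigma_b(n)\, e^{-2\pi n y}$.

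Next I would dispose of the divisor sum. Writing $\sigma_b(n) = n^b \sum_{d \mid n} d^{-b} \le n^b \sum_{d = 1}^{\infty} d^{-b}$ and bounding the tail of $\sum_{d \ge 1} d^{-b}$ by $\int_{1}^{\infty} t^{-b}\,\rmd t = 1/(b-1)$ gives $\sigma_b(n) \le \tfrac{b}{b-1}\, n^{b}$. Thus the quantity to be estimated is at most $\tfrac{b}{b-1} \sum_{n \ge N} n^{a+b}\, e^{-2\pi n y}$; this step is the source of the factor $b/(b-1)$ appearing in~\eqref{eq:prop:errorbound_ekd_partial} (and in particular the statement is only meaningful for $b \ge 2$).

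The main step is the bound $\sum_{n \ge N} n^{a+b} e^{-2\pi n y} \le \int_{N-1}^{\infty} t^{a+b} e^{-2\pi t y}\,\rmd t$. Setting $g(t) = t^{a+b} e^{-2\pi t y}$, one computes $g'(t) = t^{a+b-1} e^{-2\pi t y}\bigl( a+b - 2\pi t y \bigr)$, which is $\le 0$ precisely when $t \ge (a+b)/(2\pi y)$; the hypothesis $N \ge 1 + (a+b)/(2\pi y)$ guarantees that $g$ is nonincreasing on the whole interval $[N-1,\infty)$. Hence $g(n) \le \int_{n-1}^{n} g(t)\,\rmd t$ for every $n \ge N$, and summing over $n \ge N$ telescopes the intervals into $[N-1,\infty)$. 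Finally the substitution $u = 2\pi t y$ converts $\int_{N-1}^{\infty} t^{a+b} e^{-2\pi t y}\,\rmd t$ into $(2\pi y)^{-(a+b+1)} \int_{2\pi(N-1)y}^{\infty} u^{a+b} e^{-u}\,\rmd u = (2\pi y)^{-(a+b+1)}\, \Ga\bigl( a+b+1,\ 2\pi(N-1)y \bigr)$, and multiplying through by $b/(b-1)$ gives exactly the right-hand side of~\eqref{eq:prop:errorbound_ekd_partial}.

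I do not expect a genuine obstacle: the only point requiring care — and the single place where the hypothesis on $N$ is actually used — is the verification that $g$ is monotone on $[N-1,\infty)$, which is what legitimizes the term-by-term integral comparison. The degenerate case $a = 0$ needs no separate treatment, since $a+b > 0$ already suffices for all of the above.
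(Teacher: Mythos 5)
Your proposal is correct and follows essentially the same route as the paper: the same divisor-sum bound $\sigma_b(n)\le \tfrac{b}{b-1}n^b$ via comparison with $\int_1^\infty t^{-b}\,\rmd t$, followed by the same monotone integral comparison of $\sum_{n\ge N} n^{a+b}e^{-2\pi ny}$ with $\int_{N-1}^\infty t^{a+b}e^{-2\pi ty}\,\rmd t$, which evaluates to the incomplete Gamma function. Your write-up is in fact slightly more careful than the paper's about where the lower limit $N-1$ enters and where the hypothesis on $N$ is used.
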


\begin{remark}
\label{rm:la:errorbound_ekd_partial_lambert_series}
Instead of employing the Fourier series expansion of~$E_k(\cdotop; \sfd, j)$ to evaluate it, one might rewrite it as a Lambert series using the relation
\begin{gather*}
  \sum_{n = 1}^\infty n^a \sigma_b(n) e(n \tau)
\;=\;
  \sum_{n = 1}^\infty A_a(e(n \tau)) \frac{n^{a+b} e(n \tau)}{(1 - e(n \tau))^{a + 1}}
\tx{,}
\end{gather*}
where~$A_a$ is the~$a$\thdash{} Eulerian polynomial. We do not employ this in our setting.
\end{remark}

\begin{proof}%
[Proof of Lemma~\ref{la:errorbound_ekd_partial}]
We have~$n^a \sigma_b(n) = n^{a+b} \sigma_{-b}(n)$ and the trivial bound using~$b > 0$
\begin{gather*}
  \sigma_{-b}(n)
\;=\;
  \sum_{d \isdiv n} d^{-b}
\;\le\;
  1 + \int_{1}^n t^{-b} \rmd\!t
\;=\;
  1 +
  \frac{1}{b - 1}
  -
  \frac{n^{1-b}}{(b - 1)}
\;<\;
  \frac{b}{b - 1}
\tx{.}
\end{gather*}
We insert this into the left hand side of~\eqref{eq:prop:errorbound_ekd_partial} to find
\begin{gather*}
  \bigg|
  \sum_{n = N}^\infty n^a \sigma_b(n) e(n \tau)
  \bigg|
\;\le\;
  \frac{b}{b-1}
  \sum_{n = N}^\infty
  n^{a+b} \exp(-2 \pi n y)
\tx{.}
\end{gather*}
The assumption~$N \ge 1 + (a+b) \slash 2 \pi y$ guarantees that the summand is monotone as a function of~$n \ge N-1$, which allows us to insert the estimate
\begin{gather*}
  \sum_{n = N}^\infty
  n^{a+b} \exp(-2 \pi n y)
\;\le\;
  \int_N^\infty
  \exp(- 2 \pi n y) n^{a+b+1}
  \frac{\rmd\!n}{n}
\le
  \frac{\Ga(a+b+1, 2 \pi (N-1) y)}{(2 \pi y)^{a+b+1}}
\tx{.}
\end{gather*}
\end{proof}

\subsection{Fourier series expansions of generalized second order Eisenstein series}

We next present the Fourier series expansions of generalized second order Eisenstein series in~\eqref{eq:def:second_order_eisenstein_series_sym_cases}. Generalized second order modular forms of type~$(\symd,\bbone)$ admit a usual Fourier series expansion, while those of type~$(\bbone,\symd)$ have a Fourier series of the form~\eqref{eq:symd_fourier_series}.

We are not able to give the Fourier series expansion in Theorem~\ref{thm:fourier_expansion_second_order_eisenstein_series} in closes form, since they feature values of a cocycle~$\phi$. We preserve the sum over double cosets in~$\Ga_\infty \backslash \SL{2}(\ZZ) \slash \Ga_\infty$ as an outer summation to emphasize the way~$\phi$ contributes to the expression. Recall that we write~$\phi(\ga)_j$ for the~$j$\thdash{} coefficient of the polynomial~$\phi(\ga)$.

\begin{theorem}
\label{thm:fourier_expansion_second_order_eisenstein_series}
Fix even integers~$\sfd \ge 0$ and~$k \ge 5 + \sfd$, and an integer~$0 \le j \le \sfd$. Then for a cocycle $\phi \in \Zpara(\SL2(\ZZ), \symd(X))$, we have
\begin{gather*}
  E_k^{[1]}(\tau;\, \phi)
\;=\;
  \sum_{r = 0}^{\sfd}
  (X - \tau)^r\,
  \sum_{n = 1}^\infty c(n)_r e(n \tau)
\end{gather*}
with Fourier coefficients
\begin{gather}
\label{eq:thm:fourier_expansion_second_order_eisenstein_series:Ephi}
  c(n)_r
\;=\;
  \sum_{\substack{\gamma \in \Ga_\infty \backslash \SL{2}(\ZZ) \slash \Ga_\infty\\\ga \not\in \Ga_\infty}}
  \frac{e(n \frac{d}{c})}{c^k}\,
  \sum_{j = r}^{\sfd}
  \phi(\gamma^{-1})_{j}\;
  \sum_{l = r}^j
  \frac{(-2 \pi i)^{k - j + l}}{(k - j + l - 1)!}
  \binom{l}{r}
  \binom{j}{l}\,
  \big(-\tfrac{d}{c}\big)^{l-r}
  n^{k - j + l - 1}
\tx{,}
\end{gather}
where~$c$ and~$d$ are the bottom entries of the double coset representative~$\ga$.

Similarly for~$\phi^\vee \in \Zpara(\SL2(\ZZ), \symd(X)^\vee)$, we have
\begin{gather*}
  E_k^{[1]}(\tau; \phi^\vee,j)
=
  \sum_{n = 1}^\infty c(n) e(n \tau)
\end{gather*}
with Fourier coefficients
\begin{gather}
\label{eq:thm:fourier_expansion_second_order_eisenstein_series:Ephij}
  c(n)
=
  \sum_{\substack{\gamma \in \Ga_\infty \backslash \SL{2}(\ZZ) \slash \Ga_\infty\\\ga \not\in \Ga_\infty}}
  \frac{(-1)^{j-k} e(n \tfrac{d}{c})}{c^{k+2j-\sfd}}
  \sum_{r = 0}^j
  \binom{j}{r}
  \frac{(2 \pi i)^{k + r}}{(k + r - 1)!}\,
  \phi^\vee(\gamma^{-1}) \Big( \big( X + \tfrac{d}{c} \big)^{\sfd-j+r} \Big)
  n^{k + r - 1}
\tx{.}
\end{gather}
\end{theorem}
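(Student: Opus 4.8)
The plan is to follow the strategy of the proof of Proposition~\ref{prop:eisenstein_series_symd_fourier_expansion}: unfold the sum in~\eqref{eq:def:second_order_eisenstein_series} over $\Gamma_\infty\backslash\SL{2}(\ZZ)$ into a double sum over the double cosets $\Gamma_\infty\backslash\SL{2}(\ZZ)\slash\Gamma_\infty$ and over $\ZZ$, substitute the slash action explicitly, and apply the Lipschitz summation formula $\sum_{m\in\ZZ}(z+m)^{-\kappa}=\frac{(-2\pi i)^\kappa}{(\kappa-1)!}\sum_{n\ge1}n^{\kappa-1}e(nz)$ to the inner summation. The new feature compared with the classical computation is that $\phi(\gamma^{-1})$ (resp.\ $\phi^\vee(\gamma^{-1})$) is \emph{not} constant on double cosets; it is, however, constant on the left cosets $\Gamma_\infty\gamma$, and it transforms in a controlled way under the remaining right action of $\Gamma_\infty$. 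All rearrangements below are legitimate because for $k\ge5+\sfd$ the series in~\eqref{eq:def:second_order_eisenstein_series} converges absolutely and locally uniformly (Proposition~5.4 of~\cite{mertens-raum-2021}).

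First I would record the group-theoretic bookkeeping. Since $\phi$ and $\phi^\vee$ are parabolic, the summand attached to the coset $\Gamma_\infty$ vanishes (and $\Gamma_\infty$ is the only double coset with lower-left entry $c=0$), so the sum runs over $\gamma\notin\Gamma_\infty$. For $\gamma_1\in\Gamma_\infty$ the cocycle relation and parabolicity give $\phi((\gamma_1\gamma)^{-1})=\phi(\gamma^{-1}\gamma_1^{-1})=\phi(\gamma^{-1})$, so the summand is well-defined on $\Gamma_\infty\backslash\SL{2}(\ZZ)$; moreover, once a set of double coset representatives is fixed, every left coset with $c\ne0$ equals $\Gamma_\infty\gamma T^m$ for a unique $m\in\ZZ$, where $\gamma$ has bottom row $(c,d)$ and $\gamma T^m$ has bottom row $(c,cm+d)$. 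Applying the cocycle relation and parabolicity once more gives $\phi((\gamma T^m)^{-1})=\symd(X)(T^{-m})\phi(\gamma^{-1})$, i.e.\ the substitution $X\mapsto X+m$ in the polynomial $\phi(\gamma^{-1})$; dually, $\phi^\vee((\gamma T^m)^{-1})$ is $\phi^\vee(\gamma^{-1})$ precomposed with $X\mapsto X-m$.

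Next I would insert the two slash actions. In the case $(\rho,\sigma)=(\symd(X),\bbone)$ with $f=1$ this is simply $(f|_k\gamma T^m)(\tau)=(c\tau+cm+d)^{-k}=c^{-k}(\tau+m+\tfrac dc)^{-k}$. In the case $(\rho,\sigma)=(\bbone,\symd(X))$ with $f=(X-\tau)^j$, the algebraic identity $(c\tau+d)(aX+b)-(a\tau+b)(cX+d)=X-\tau$ (already used in the proof of Proposition~\ref{prop:eisenstein_series_symd_fourier_expansion}) gives $\big((X-\tau)^j\big|_{k,\sfd}\gamma T^m\big)(\tau)=c^{-(k+2j-\sfd)}(X+m+\tfrac dc)^{\sfd-j}(X-\tau)^j(\tau+m+\tfrac dc)^{-k-j}$. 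Writing $w:=\tau+m+\tfrac dc$, I would then, in the first case, expand $\phi(\gamma^{-1})(X+m)=\phi(\gamma^{-1})\big((X-\tau)+(w-\tfrac dc)\big)$ binomially, first in $(X-\tau)$ and then in $(w-\tfrac dc)$, so that each resulting term carries a single power $w^{s-k}$; in the second case, expand $(X-m-\tau)^j=\big((X+\tfrac dc)-w\big)^j$ binomially and pull the functional $\phi^\vee(\gamma^{-1})$ inside, so that each term carries a single power $w^{-(k+r)}$ together with the value $\phi^\vee(\gamma^{-1})\big((X+\tfrac dc)^{\sfd-j+r}\big)$.

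Finally, the Lipschitz formula applies to $\sum_{m\in\ZZ}w^{-\kappa}$ since every exponent that arises is at least $k-\sfd\ge5$. Collecting the coefficient of $(X-\tau)^r e(n\tau)$ (resp.\ of $e(n\tau)$) and reindexing then produces~\eqref{eq:thm:fourier_expansion_second_order_eisenstein_series:Ephi} (resp.~\eqref{eq:thm:fourier_expansion_second_order_eisenstein_series:Ephij}); the first reindexing uses the ``subset of a subset'' identity $\binom{j}{r}\binom{j-r}{j-l}=\binom{l}{r}\binom{j}{l}$, and the second uses the sign simplification $(-1)^{j-r}(-2\pi i)^{k+r}=(-1)^{j-k}(2\pi i)^{k+r}$. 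I expect the main obstacle to be precisely this last manipulation: one must verify that after the binomial expansions the entire $m$-dependence collapses onto the single power of $w$ that feeds the Lipschitz formula, which requires carefully coordinating the $T^m$-twist of the cocycle with the $\tfrac dc$-shift produced by the slash factor, and then tracking exactly which binomial coefficients and signs survive.
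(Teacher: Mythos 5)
Your proposal is correct and follows essentially the same route as the paper: unfold over double cosets, use parabolicity to reduce $\phi((\gamma T^m)^{-1})$ to a $T^{-m}$-twist of $\phi(\gamma^{-1})$, and apply the Lipschitz summation formula after binomial expansion; your coefficient bookkeeping (the identity $\binom{j}{r}\binom{j-r}{j-l}=\binom{l}{r}\binom{j}{l}$ and the sign collapse $(-1)^{j-r}(-2\pi i)^{k+r}=(-1)^{j-k}(2\pi i)^{k+r}$) reproduces both stated formulas. The only cosmetic difference is in the dual case, where the paper inserts $\gamma T^m\tau=\tfrac{a}{c}-\tfrac{1}{c^2(\tau+d/c+m)}$ and computes $\symd(\gamma^{-1})\big(X-\tfrac{a}{c}\big)^{j-r}$ explicitly, whereas you reuse the slash identity from Proposition~\ref{prop:eisenstein_series_symd_fourier_expansion} and precompose the functional with $X\mapsto X-m$ — the two manipulations are equivalent.
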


\begin{proof}
Recall that the series defining~$E_k^{[1]}(\tau;\phi)$ is absolutely convergent by Proposition~5.4 of~\cite{mertens-raum-2021}. As usual in Eisenstein series calculations, we split the summation into two contributions, one from double cosets and one from~$\Ga_\infty$, and then employ the cocycle relation for~$\phi$ and~$\phi(T) = 0$ to find that:
\begin{gather*}
  E_k^{[1]}(\tau;\phi)
=
  \sum_{\ga \in \Ga_\infty \backslash \SL2(\ZZ) \slash \Ga_\infty}
  \sum_{m \in \ZZ}
  \phi\big( (\gamma T^m)^{-1} \big) \big|_k \gamma \delta
\;=\;
  \sum_{\substack{\ga \in \Ga_\infty \backslash \SL2(\ZZ) \slash \Ga_\infty\\\ga \not\in \Ga_\infty}}
  \frac{1}{c^k}
  \sum_{m \in \ZZ}
  \frac{\symd(T^{-m}) \phi(\gamma^{-1})}{(\tau + \frac{d}{c} + m)^k}
\tx{.}
\end{gather*}
We expand~$\phi(\ga^{-1})$ and insert the action of the symmetric power representation:
\begin{gather*}
  \symd(T^{-m}) \sum_{j=0}^\sfd \phi(\ga^{-1})_j X^j
\;=\;
  \sum_{j=0}^\sfd \phi(\ga^{-1})_j (X + m)^j
\tx{.}
\end{gather*}
This leads to the following sum, which we can evaluate using the Lipschitz summation formula:
\begin{gather}
\label{eq:lipschitz_factorization_summation_formula}
  \sum_{m \in \ZZ}
  \frac{(X + m)^j}{(\tau + m)^k}
\;=\;
  \sum_{m \in \ZZ}
  \frac{\big( (X - \tau) + (\tau + m) \big)^j}{(\tau + m)^k}
\;=\;
  \sum_{l = 0}^j
  (X - \tau)^l\,
  \binom{j}{l}
  \frac{(-2 \pi i)^{k-j+l}}{(k-j+l-1)!}
  \sum_{n = 1}^\infty
  n^{k-j+l-1} e(n \tau)
\tx{.}
\end{gather}
We insert~$\tau + \frac{d}{c}$ for~$\tau$, expand the result in terms of~$(X - \tau)^r$, and interchange the summations over~$j$, $l$, and~$r$ to obtain the desired formula.

We proceed in a similar way in the second case. We use continuity of~$\phi^\vee(\ga^{-1})$ to find that
\begin{gather*}
  E_k^{[1]}(\tau;\phi^\vee,j)
\;=\;
  \sum_{\substack{\ga \in \Ga_\infty \backslash \SL2(\ZZ) \slash \Ga_\infty\\\ga \not\in \Ga_\infty}}
  \frac{1}{c^k}
  \phi^\vee(\ga^{-1})\Bigg( \symd(\ga^{-1})
  \sum_{m \in \ZZ}
  \frac{(X - \ga T^m \tau)^j}{(\tau + \frac{d}{c} + m)^k}
  \Bigg)
\tx{.}
\end{gather*}
Since~$c \ne 0$ for~$\ga \not\in \Ga_\infty$, we can insert
\begin{gather*}
  \gamma T^m \tau
\;=\;
  \frac{a}{c}
  -
  \frac{1}{c^2 (\tau + \frac{d}{c} + m)}
\end{gather*}
into~$X - \ga T^m \tau$. We next expand
\begin{gather*}
  (X - \ga T^m \tau)^j
\;=\;
  \sum_{r = 0}^j
  \mbinom{j}{r}
  \big( X - \tfrac{a}{c} \bigr)^{j - r} c^{-2r} (\tau + \tfrac{d}{c} + m)^{-r}
\tx{.}
\end{gather*}
When inserting these simplification, we find that
\begin{gather*}
  E_k^{[1]}(\tau;\phi^\vee,j)
\;=\;
  \sum_{\substack{\ga \in \Ga_\infty \backslash \SL2(\ZZ) \slash \Ga_\infty\\\ga \not\in \Ga_\infty}}
  \sum_{r = 0}^j
  \mbinom{j}{r}
  \frac{1}{c^{k+2r}}
  \phi^\vee(\ga^{-1})\Bigg(
  \symd(\ga^{-1})
  \big( X - \tfrac{a}{c} \bigr)^{j - r}
  \Bigg)
  \sum_{m \in \ZZ}
  \frac{1}{(\tau + \frac{d}{c} + m)^{k+r}}
\tx{.}
\end{gather*}

The resulting expression for the Eisenstein series features
\begin{gather*}
  \symd(\ga^{-1}) \big( X - \tfrac{a}{c} \big)^{j-r}
\;=\;
  (c X + d)^{\sfd-j+r} \big( a X + b - \tfrac{a}{c} (c X + d) \big)^{j-r}
\;=\;
  (-1)^{j-r} c^{\sfd-2j+2r} \big(X + \tfrac{d}{c} \big)^{d-j+r}
\tx{.}
\end{gather*}
After inserting the right hand side, we complete the proof by utilizing the Lipschitz summation formula in~\eqref{eq:lipschitz_factorization_summation_formula} to simplify
\begin{gather*}
  E_k^{[1]}(\tau;\phi^\vee,j)
\;=\;
  \sum_{\substack{\ga \in \Ga_\infty \backslash \SL2(\ZZ) \slash \Ga_\infty\\\ga \not\in \Ga_\infty}}
  \sum_{r = 0}^j
  \mbinom{j}{r}
  \frac{(-1)^{j-r}}{c^{k+2j-\sfd}}
  \phi^\vee(\ga^{-1})\Big( \big(X + \tfrac{d}{c} \big)^{d-j+r} \Big)
  \sum_{m \in \ZZ}
  \frac{1}{(\tau + \frac{d}{c} + m)^{k+r}}
\tx{.}
\end{gather*}
\end{proof}

\subsection{Evaluation of generalized second order Eisenstein series}
\label{ssec:eval_g2es}

As in the case of the usual Eisenstein series, we want to employ the Fourier series expansion of~$E_k^{[1]}(\cdotop; \phi, j)$ to evaluate it at~$\tau \in \HS$. This requires two types of estimates, since the expression in Theorem~\ref{thm:fourier_expansion_second_order_eisenstein_series} features the infinite sum over~$\Ga_\infty \backslash \SL{2}(\ZZ) \slash \Ga_\infty$.

Our first two lemmas provide a bound on cocycle values that appear in Theorem~\ref{thm:fourier_expansion_second_order_eisenstein_series}. Convexity bounds for additive twists of~$\rmL$-functions with implicit constants were previously used, for instance, by Diamantis~\cite{diamantis-2020-preprint} to establish convergence of generalized second order Poincar\'e series.

\begin{lemma}%
\label{la:twisted_Lfunction_estimate}
Let~$f \in \rmM_k$ be a normalized Hecke eigenform of weight~$k > 3$. Then for integers~$c \ne 0$ and~$d$, and~$s \in \CC$, $\Re(s) \ge 1$, we have
\begin{gather*}
  \big| \rmL\big( f, \mfrac{-d}{c}; s \big) \big|
\;\le\;
  c^{k-2} \frac{\Ga(k-1)}{(2\pi)^{k-1}} \zeta\big( \mfrac{k-1}{2} \big)^2
\tx{.}
\end{gather*}
\end{lemma}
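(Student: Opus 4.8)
The plan is to establish this as a convexity bound, combining the functional equation of the additive twist with the trivial estimate in its region of absolute convergence. Since $e(-dn/c)$ depends only on $-d/c \bmod \ZZ$, and $k > 3$ makes $c \mapsto c^{k-2}$ increasing, I may assume $\gcd(c,d) = 1$; then there is $\gamma = \begin{psmatrix} a & b \\ c & d \end{psmatrix} \in \SL{2}(\ZZ)$, and the degenerate case $|c| = 1$ (trivial twist) is subsumed. I will spell out the argument for $f$ a normalized Hecke eigen \emph{cusp} form, which is the case relevant to parabolic cocycles; the Eisenstein case is analogous, using the explicit coefficients $\sigma_{k-1}(n)$ and the Hurwitz-zeta factorization of the twisted Eisenstein $\rmL$-function.

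The starting point is the Mellin representation $(2\pi)^{-s}\,\Gamma(s)\,\rmL(f,-\tfrac dc;s) = \int_0^\infty f(-\tfrac dc + iy)\, y^{s-1}\, \rmd y$. Evaluating $f|_k\gamma^{-1} = f$ at $\tfrac ac + iu$ gives the modular identity $f(-\tfrac dc + iy) = (-1)^{k/2}(cy)^{-k}\, f(\tfrac ac + \tfrac{i}{c^2 y})$, which forces $f(-\tfrac dc+iy)$ to decay exponentially both as $y\to\infty$ and, by this identity, as $y\to0^+$; hence the Mellin integral converges for every $s\in\CC$, is entire, and agrees with $\rmL$ throughout by analytic continuation. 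Substituting $y\mapsto 1/(c^2y)$ and reusing the identity yields the functional equation
\begin{gather*}
  (2\pi)^{-s}\,\Gamma(s)\,\rmL\big(f, -\tfrac dc; s\big)
  \;=\;
  (-1)^{k/2}\, c^{k-2s}\, (2\pi)^{s-k}\, \Gamma(k-s)\, \rmL\big(f, \tfrac ac; k-s\big)
\tx{,}
\end{gather*}
again an additive twist with denominator $c$ on the right-hand side.

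Deligne's bound $|c(f;n)| \le \sigma_0(n)\, n^{(k-1)/2}$ gives, uniformly on the line $\Re(s) = k-1$, that $\big|\rmL(f,-\tfrac dc;s)\big| \le \sum_{n\ge1}\sigma_0(n)\, n^{-(k-1)/2} = \zeta\big(\tfrac{k-1}{2}\big)^2$, which is finite precisely because $k>3$. Feeding the same estimate for $\rmL(f,\tfrac ac;\cdot)$ into the functional equation bounds $\big|\rmL(f,-\tfrac dc;s)\big|$ on the reflected line $\Re(s)=1$, the constant being furnished by the Gamma factors; for $\Re(s)\ge k-1$ the first estimate alone already suffices. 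On the remaining strip $1\le\Re(s)\le k-1$ the function $\rmL(f,-\tfrac dc;s)$ is holomorphic and the completed function is of finite order, so the Phragm\'en--Lindel\"of principle interpolates between the two line bounds; estimating $|\Gamma(k-s)/\Gamma(s)|$ on vertical lines by a suitable expression in $c$ and $k$ gives the stated inequality.

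I expect the main obstacle to be this interpolation step: making Phragm\'en--Lindel\"of quantitative, i.e.\ propagating the precise power of $c$ and the factor $\Gamma(k-1)/(2\pi)^{k-1}$ through the three-lines argument and controlling $|\Gamma(k-s)/\Gamma(s)|$, rather than absorbing everything into an unspecified constant as in \cite{diamantis-2020-preprint}. A subsidiary point, needed for the functional equation to be an honest identity of meromorphic functions rather than a formal manipulation, is the exponential decay of $f(-\tfrac dc+iy)$ as $y\to0^+$; this follows from the modular identity of the second paragraph together with the cuspidality of $f$.
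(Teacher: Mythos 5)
Your argument is essentially the paper's: the proof there likewise combines the functional equation of the additive twist (cited from Theorem~3.1 of Diamantis--Hoffstein--K\i ral--Lee rather than rederived from modularity via the Mellin transform as you do), the trivial bound $\sum_n |c(f;n)|\,n^{1-k} \le \zeta\big(\tfrac{k-1}{2}\big)^2$ on the dual side, and the Phragm\'en--Lindel\"of principle. The paper is exactly as terse as you are about the quantitative interpolation step you flag as the main obstacle, so there is no gap relative to the published proof.
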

\begin{proof}
We use the functional equation for~$\rmL(f, -\frac{d}{c}; s)$ that is given in Theorem~3.1 of~\cite{diamantis-hoffstein-kiral-lee-2020} with~$q = 1$, $\xi = \bbone$, $r = 1$, $M = M_1 = c$, $\alpha = a= -d$, and~$u = 0$. Notice that the statement of that theorem features a contribution~$a(r \slash n e)$ which must be read as a Fourier coefficient of~$f$ as opposed to the product of~$a$ with~$r \slash n e$. We have
\begin{gather*}
  \Lambda\big( f, \mfrac{-d}{c}; s \big)
=
  i^k c^{k-2s}
  \Lambda\big( f, \mfrac{-d}{c}; k-s \big)
\tx{,}
\end{gather*}
where~$\Lambda( f, -d / c\mspace{1mu}; s ) = \Ga(s) (2 \pi)^{-s}\, \rmL( f, -\frac{d}{c}; s )$ is the completed~$\rmL$-function.

Since~$k > 3$ and~$f$ is a normalized Hecke eigenform, we have
\begin{gather*}
  \big| \rmL\big( f, \mfrac{-d}{c}; k-s \big) \big|
\le
  \sum_n \big| e\big(n \mfrac{-d}{c}\big) c(f;n) n^{s-k} \big|
=
  \sum_n |c(f;n)| n^{1-k}
\le
  \sum_n \sigma_0(n) n^{\frac{1-k}{2}}
=
  \zeta\big(\mfrac{k-1}{2}\big)^2
\tx{.}
\end{gather*}
We insert this into the right hand side of the functional equation and then apply the Phragm\'en-Lindel\"of principle to finish the proof.
\end{proof}

\begin{lemma}
\label{la:cocycle_estimate}
Let~$f \in \rmM_k$ be a normalized Hecke eigenform of weight~$k > 3$, and recall the cocycle~$\phi_{\cE(f)}$ associated in~\eqref{eq:def:cocycle_modular_deficit} to the Eichler integral in~\eqref{eq:def:eichler_integral}. Then for~$\ga = \begin{psmatrix} a & b \\ c & d \end{psmatrix} \in \SL{2}(\ZZ)$ with~$|d| < |c|$, we have
\begin{gather}
\label{eq:la:cocycle_estimate}
  \sum_{j = 0}^{k-2} \big| \phi_{\cE(f)}(\ga^{-1})_j \big|
\le
  c^{k-2}\, \frac{e^2\, \Ga(k-1)^2}{(2 \pi)^k}\,
  \zeta\big(\mfrac{k-1}{2}\big)^2
\tx{.}
\end{gather}
\end{lemma}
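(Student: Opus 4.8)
The plan is to read the coefficients $\phi_{\cE(f)}(\ga^{-1})_j$ off the closed form in~\eqref{eq:cocycle_explicit_expression} and then estimate summand by summand, using Lemma~\ref{la:twisted_Lfunction_estimate} to control the additively twisted $\rmL$-values. Writing $\ga = \begin{psmatrix} a & b \\ c & d \end{psmatrix}$, the formula~\eqref{eq:cocycle_explicit_expression} exhibits $\phi_{\cE(f)}(\ga^{-1})$ as a polynomial of degree $k-2$ in the variable of $\sym^{k-2}$, so that, after the index shift $j \leftrightarrow k-2-j$,
\[
  \phi_{\cE(f)}(\ga^{-1})_j
  \;=\;
  \frac{(k-2)!}{j!}
  \sum_{r=0}^{k-2-j}
  \frac{(d/c)^{k-2-j-r}}{(k-2-j-r)!\,(2\pi i)^{r+1}}\,
  \rmL\big(f, \tfrac{-d}{c}; r+1\big).
\]
First I would apply the triangle inequality and bound the three factors in each summand: $|d/c|^{k-2-j-r} \le 1$, which is exactly where the hypothesis $|d| < |c|$ enters (and which eliminates all $d$-dependence without any finer arithmetic); $|2\pi i|^{-(r+1)} = (2\pi)^{-(r+1)} \le (2\pi)^{-1}$; and $\big| \rmL(f, -d/c; r+1) \big| \le |c|^{k-2}\, \Ga(k-1)\, (2\pi)^{1-k}\, \zeta(\tfrac{k-1}{2})^2$ by Lemma~\ref{la:twisted_Lfunction_estimate}, which applies since $\Re(r+1) = r+1 \ge 1$.

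It then remains to evaluate the two nested factorial sums. The inner sum over $r$ contributes $\sum_{r=0}^{k-2-j} \tfrac{1}{(k-2-j-r)!} = \sum_{m=0}^{k-2-j} \tfrac{1}{m!} < e$, and the outer sum over $j$ contributes $\sum_{j=0}^{k-2} \tfrac{(k-2)!}{j!} < e\,(k-2)! = e\,\Ga(k-1)$. Multiplying these two factors of $e$ with the $\Ga(k-1)$ coming from Lemma~\ref{la:twisted_Lfunction_estimate} produces $e^2\,\Ga(k-1)^2$; collecting the power $(2\pi)^{1-k}$ from the $\rmL$-bound together with the surviving $(2\pi)^{-1}$ produces $(2\pi)^{-k}$; and $|c|^{k-2} = c^{k-2}$ since $k$ is even. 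This is precisely the right-hand side of~\eqref{eq:la:cocycle_estimate}.

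I do not expect a genuine obstacle: the analytic content — the convexity estimate for additive twists — is already packaged in Lemma~\ref{la:twisted_Lfunction_estimate}, and the present statement is a purely combinatorial repackaging of it at the level of cocycle coefficients. The only points demanding a little care are the bookkeeping of the index shift $j \leftrightarrow k-2-j$ between the exponent of the variable in~\eqref{eq:cocycle_explicit_expression} and the label $j$ in the statement, and checking that each of the two factorial sums is bounded by the weight-independent constant $e$ (rather than by something growing with $k$), which is what keeps the final constant of the clean shape asserted.
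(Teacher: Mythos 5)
Your proposal is correct and follows essentially the same route as the paper's proof: expand via~\eqref{eq:cocycle_explicit_expression}, apply the triangle inequality with $|d/c|<1$ and $(2\pi)^{-(r+1)}\le(2\pi)^{-1}$, invoke Lemma~\ref{la:twisted_Lfunction_estimate} for the $\rmL$-values, and bound the resulting double factorial sum by $e^2\,\Ga(k-1)$. The only cosmetic difference is that you relabel the coefficient index $j\leftrightarrow k-2-j$ before summing, which is immaterial since the sum runs over all coefficients.
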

\begin{proof}
We combine the explicit expression~\eqref{eq:cocycle_explicit_expression} for~$\phi_{\cE(f)}(\ga^{-1})$ with the estimate in Lemma~\ref{la:twisted_Lfunction_estimate} to find that the left hand side of~\eqref{eq:la:cocycle_estimate} is bounded by
\begin{align*}
&
  \sum_{j = 0}^{k-2}
  \frac{(k-2)!}{(k-2-j)!}
  \sum_{r = 0}^j
  \frac{| d \slash c |^{j-r}}{(j-r)!\, (2 \pi)^{r+1}}\,
  c^{k-2} \frac{\Ga(k-1)}{(2\pi)^{k-1}} \zeta\big( \mfrac{k-1}{2} \big)^2
\\
\le\;&
  c^{k-2} \frac{\Ga(k-1)}{(2\pi)^{k}} \zeta\big( \mfrac{k-1}{2} \big)^2\,
  \sum_{j = 0}^{k-2}
  \sum_{r = 0}^j
  \frac{(k-2)!}{(k-2-j)!(j-r)!}
\tx{.}
\end{align*}
The double sum is bounded by~$e^2 \Ga(k-1)$, which establishes the result.
\end{proof}

The next lemma estimates the tail of the expression for~$c(n)$ in Theorem~\ref{thm:fourier_expansion_second_order_eisenstein_series}, based on the bound in the previous lemma.

\begin{lemma}
\label{la:fourier_coefficient_tail_estimate}
Let~$f \in \rmM_l$ be a normalized Hecke eigenform of weight~$l > 3$ and set~$\sfd = l-2$. Fix an integer~$0 \le l \le \sfd$ and an even integer~$k > 2 + 2(\sfd-j)$. Then given a positive integer~$C$, we have
\begin{gather}
\label{eq:la:fourier_coefficient_tail_estimate}
\begin{aligned}
&
  \Bigg|
  \sum_{\substack{\gamma \in \Ga_\infty \backslash \SL{2}(\ZZ) \slash \Ga_\infty\\|c| \ge C}}
  \frac{(-1)^{j-k} e(n \tfrac{d}{c})}{c^{k+2j-\sfd}}
  \sum_{r = 0}^j
  \mbinom{j}{r}
  \frac{(2 \pi i)^{k + r}}{(k + r - 1)!}\,
  \phi_{\cE(f)}^\vee(\gamma^{-1}) \Big( \big( X + \tfrac{d}{c} \big)^{\sfd-j+r} \Big)
  n^{k + r - 1}
  \Bigg|
\\
\le\;&
  n^{k + j - 1}\,
  \Big( \bbone_{C = 1} + \frac{1}{(k+2j-2l+2)C^{k+2j-2l+3}} \Big)\,
  \frac{2^j e^2\, (l-2)!^2\, (2 \pi)^{k + j - l}}{(k - 1)! }\,
  \zeta\big(\mfrac{l-1}{2}\big)^2
\tx{,}
\end{aligned}
\end{gather}
where~$c$ and~$d$ on the left hand side are the bottom entries of~$\ga$, and~$\bbone_{C = 1}$ equals one if and only if\/~$C = 1$ and otherwise zero.
\end{lemma}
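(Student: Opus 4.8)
The plan is to apply the triangle inequality to the double-coset sum in~\eqref{eq:la:fourier_coefficient_tail_estimate}, estimate each summand using the pointwise bound of Lemma~\ref{la:cocycle_estimate} together with elementary bounds on the binomial and factorial factors, and then sum the resulting convergent Dirichlet-type series in the lower-left entry.

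First I would fix, for every double coset~$\gamma\in\Ga_\infty\backslash\SL{2}(\ZZ)\slash\Ga_\infty$ with~$\gamma\notin\Ga_\infty$, a representative whose bottom entries~$(c,d)$ satisfy~$c\geq 1$ and~$|d|<|c|$; this is possible since~$d$ only matters modulo~$c$, and it is exactly the normalization under which Lemma~\ref{la:cocycle_estimate} applies. Here~$\phi_{\cE(f)}$ is, by~\eqref{eq:cocycle_eichler_integral} with~$\sfd=l-2$, a parabolic cocycle for~$\symd(X)$, so~$\phi_{\cE(f)}^\vee$ via~\eqref{eq:def:dual_cocycle} is one for~$\symd(X)^\vee$ and the expressions make sense. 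For each~$c\geq 1$ there are at most~$c$ such double cosets. After the triangle inequality, and using~$\big|e(n\tfrac{d}{c})\big|=1$ and~$\big|(2\pi i)^{k+r}\big|=(2\pi)^{k+r}$, the task reduces to bounding, for one such~$\gamma$, the quantity
\begin{gather*}
  \frac{1}{c^{k+2j-\sfd}}
  \sum_{r=0}^j\binom{j}{r}\frac{(2\pi)^{k+r}}{(k+r-1)!}\,
  \Big|\phi_{\cE(f)}^\vee(\gamma^{-1})\big((X+\tfrac{d}{c})^{\sfd-j+r}\big)\Big|\;
  n^{k+r-1}
\tx{.}
\end{gather*}

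For the inner sum one has~$n^{k+r-1}\leq n^{k+j-1}$ (as~$n\geq 1$ and~$r\leq j$), $\frac{(2\pi)^{k+r}}{(k+r-1)!}\leq\frac{(2\pi)^{k+j}}{(k-1)!}$ (as~$k\geq 1$ and~$r\leq j$), and~$\sum_{r}\binom{j}{r}=2^j$. For the cocycle value, I would unwind~$\phi_{\cE(f)}^\vee(\gamma^{-1})=\langle\phi_{\cE(f)}(\gamma^{-1}),\,\cdot\,\rangle$ through the pairing~\eqref{eq:def:symd_self_duality_pairing} and expand~$(X+\tfrac{d}{c})^{\sfd-j+r}$ in powers of~$X$; since~$r\leq j$ one has~$\binom{\sfd-j+r}{\sfd-i}\leq\binom{\sfd}{i}$ for all~$i$, and since~$|d|<|c|$ every power of~$\tfrac{d}{c}$ that appears has modulus~$\leq 1$, whence
\begin{gather*}
  \Big|\phi_{\cE(f)}^\vee(\gamma^{-1})\big((X+\tfrac{d}{c})^{\sfd-j+r}\big)\Big|
\;\leq\;
  \sum_{i=0}^{\sfd}\big|\phi_{\cE(f)}(\gamma^{-1})_i\big|
\;\leq\;
  c^{\,l-2}\,\frac{e^2(l-2)!^2}{(2\pi)^l}\,\zeta\big(\tfrac{l-1}{2}\big)^2
\tx{,}
\end{gather*}
the last step being Lemma~\ref{la:cocycle_estimate} with~$\Ga(l-1)=(l-2)!$. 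Combining everything, and using~$\sfd=l-2$ in~$c^{l-2}/c^{k+2j-\sfd}=c^{\,2\sfd-k-2j}$, the single-$\gamma$ quantity is at most
\begin{gather*}
  n^{k+j-1}\,
  \frac{2^j\,e^2\,(l-2)!^2\,(2\pi)^{k+j-l}}{(k-1)!}\,
  \zeta\big(\tfrac{l-1}{2}\big)^2\,
  c^{\,2\sfd-k-2j}
\tx{.}
\end{gather*}

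Summing over the~$\leq c$ double cosets for each~$c\geq C$ leaves~$\sum_{c\geq C}c^{\,2\sfd+1-k-2j}=\sum_{c\geq C}c^{-(k+2j-2l+3)}$; the hypothesis~$k>2+2(\sfd-j)$ is exactly~$k+2j-2l+2>0$, so this exponent exceeds~$1$, the series converges, and a comparison with~$\int x^{-(k+2j-2l+3)}\,\rmd x$—together with isolating the non-decaying~$c=1$ term, which is present precisely when~$C=1$ and yields the~$\bbone_{C=1}$ summand—produces the parenthetical factor in~\eqref{eq:la:fourier_coefficient_tail_estimate}. I expect the only real difficulty to be organizational: steering the several nested binomial coefficients so that they collapse to the advertised constants~$2^j$ and~$(l-2)!^2$, and respecting the normalization~$|d|<|c|$ (needed both to invoke Lemma~\ref{la:cocycle_estimate} and to keep the powers of~$\tfrac{d}{c}$ bounded) while the unique non-decaying term~$c=1$ is split off.
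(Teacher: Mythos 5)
Your proposal follows the paper's proof essentially step for step: the same normalization $|d|<|c|$ of double coset representatives, the same bound $\big|\phi^\vee_{\cE(f)}(\gamma^{-1})\big((X+\tfrac{d}{c})^{\sfd-j+r}\big)\big|\le\sum_i\big|\phi_{\cE(f)}(\gamma^{-1})_i\big|$ obtained from the pairing together with $\binom{\sfd-j+r}{\sfd-i}\le\binom{\sfd}{i}$ and $|d/c|<1$, the same application of Lemma~\ref{la:cocycle_estimate}, and the same extraction of $2^j$, $n^{k+j-1}$, and $(2\pi)^{k+j}/(k-1)!$ before comparing the remaining sum over $c$ with an integral. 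The only soft spot is that final comparison: $\sum_{c\ge C}c^{-(k+2j-2l+3)}$ is of size roughly $C^{-(k+2j-2l+2)}/(k+2j-2l+2)$, one power of $C$ larger than the stated parenthetical factor when $C\ge 2$ (the two agree at $C=1$); this looseness is present in the paper's own write-up as well, so it is an inherited imprecision in the lemma's constant rather than a new gap in your argument.
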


\begin{remark}
A better bound for the tails of~\eqref{eq:thm:fourier_expansion_second_order_eisenstein_series:Ephij} can be achieved by intertwining the summation over~$n$ and~$\ga$ to utilize Lambert series as in Remark~\ref{rm:la:errorbound_ekd_partial_lambert_series}.
\end{remark}

\begin{proof}[Proof of Lemma~\ref{la:fourier_coefficient_tail_estimate}]
For simplicity, we write~$\phi^\vee$ for the cocycle in~\eqref{eq:la:fourier_coefficient_tail_estimate}. The sum over double coset can be written as a sum over integers~$c \ge C$ and~$d \pmod{c}$. We can and will assume that~$|d| < c$ as in the assumptions of Lemma~\ref{la:cocycle_estimate}. We expand the power of~$X + \frac{d}{c}$ and apply the coefficient formula for~$\phi^\vee$ in~\eqref{eq:def:dual_cocycle} to find that
\begin{align*}
  \Big|
  \phi^\vee(\ga^{-1})\Big(
  (X + \tfrac{d}{c})^{\sfd-j+r}
  \Big)
  \Big|
\;=\;
  \Big|
  \sum_{i = 0}^\sfd
  (-1)^{\sfd-i} \mbinom{\sfd}{i}^{-1}
  \phi(\ga^{-1})_{\sfd-i}\;
  \mbinom{\sfd-j+r}{i}
  \big(\tfrac{d}{c}\big)^{\sfd-j+r-i}
  \Big|
\;\le\;
  \sum_{i = 0}^\sfd
  \big| \phi(\ga^{-1})_{\sfd-i} \big|
\tx{.}
\end{align*}

We insert this into the left hand side of~\eqref{eq:la:fourier_coefficient_tail_estimate}, which is thus bounded by
\begin{align*}
&
  \sum_{c = C+1}^\infty
  \sum_{d = 0}^{c-1}
  \frac{1}{c^{k+2j-l+2}}
  \sum_{r = 0}^j
  \mbinom{j}{r}
  \frac{(2 \pi)^{k + j}}{(k - 1)!}\,
  c^{l-2}\, \frac{e^2\, \Ga(l-1)^2}{(2 \pi)^l}\,
  \zeta\big(\mfrac{l-1}{2}\big)^2
  n^{k + r - 1}
\\
\le&
  n^{k + j - 1}\,
  \frac{2^j e^2, \Ga(l-1)^2\, (2 \pi)^{k + j - l}}{(k - 1)!}\,
  \zeta\big(\mfrac{l-1}{2}\big)^2
  \sum_{c = C}^\infty
  \frac{1}{c^{k+2j-2l+4}}
\end{align*}
We finish the proof by estimating the sum over~$c$ by the integral from~$C-1$ or~$C$ to~$\infty$ of~$c^{-k-2j+2l-4}$.
\end{proof}

Our final lemma concerns the tails of the Fourier series expansion of~$E_k(\cdotop; \phi^\vee, j)$. 
\begin{lemma}
\label{la:fourier_expansion_tail_estimate}
Let~$f \in \rmM_l$ be a normalized Hecke eigenform of weight~$l > 3$ and set~$\sfd = l-2$. Fix an integer~$0 \le l \le \sfd$ and an even integer~$k > 2 + 2(\sfd-j)$. Let~$c(n)$ be the Fourier coefficients of~$E_k(\cdotop; \phi^\vee_{\cE(f)}, j)$ given in~\eqref{eq:thm:fourier_expansion_second_order_eisenstein_series:Ephij}. Given a positive integer~$N$, we have
\begin{gather}
\label{eq:la:fourier_expansion_tail_estimate}
  \Bigg|
  \sum_{n = N}^\infty
  c(n) e(n \tau)
  \Bigg|
\;\le\;
  \frac{k+2j-2l+3}{k+2j-2l+2}
  \frac{2^j e^2\, (l-2)!^2}{(k - 1)! (2 \pi)^l}\,
  \zeta\big(\mfrac{l-1}{2}\big)^2
  \frac{\Ga(k+j, 2 \pi (N-1) y)}{y^{k+j}}
\tx{.}
\end{gather}
\end{lemma}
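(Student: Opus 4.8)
The plan is to reduce the claim to the integral comparison already used in the proof of Lemma~\ref{la:errorbound_ekd_partial}, once we have a clean pointwise bound on the Fourier coefficients $c(n)$ of $E_k(\cdotop; \phi^\vee_{\cE(f)}, j)$.

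First I would bound $|c(n)|$ by specializing Lemma~\ref{la:fourier_coefficient_tail_estimate} to $C = 1$. Every double coset representative $\ga \notin \Ga_\infty$ has $c \neq 0$, hence $|c| \ge 1$, so the sum over $|c| \ge 1$ appearing in Lemma~\ref{la:fourier_coefficient_tail_estimate} is exactly the sum defining $c(n)$ in~\eqref{eq:thm:fourier_expansion_second_order_eisenstein_series:Ephij}. With $C = 1$ the prefactor $\bbone_{C=1} + \big((k+2j-2l+2)C^{k+2j-2l+3}\big)^{-1}$ collapses to $\tfrac{k+2j-2l+3}{k+2j-2l+2}$, giving
\[
  |c(n)|
\;\le\;
  n^{k+j-1}\,
  \frac{k+2j-2l+3}{k+2j-2l+2}\,
  \frac{2^j e^2\, (l-2)!^2\, (2\pi)^{k+j-l}}{(k-1)!}\,
  \zeta\big(\tfrac{l-1}{2}\big)^2
\tx{.}
\]

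Next I would apply the triangle inequality to $\sum_{n \ge N} c(n) e(n\tau)$, use $|e(n\tau)| = e^{-2\pi n y}$, pull the $n$-independent constant out of the sum, and estimate the remaining $\sum_{n \ge N} n^{k+j-1} e^{-2\pi n y}$ exactly as in the proof of Lemma~\ref{la:errorbound_ekd_partial}: the summand is eventually monotone decreasing, so this sum is at most $\int_{N-1}^\infty t^{k+j-1} e^{-2\pi t y}\,\rmd t = \Ga(k+j, 2\pi(N-1)y) / (2\pi y)^{k+j}$. Combining this with the constant above and simplifying $(2\pi)^{k+j-l}/(2\pi y)^{k+j} = \big((2\pi)^l y^{k+j}\big)^{-1}$ produces precisely the right-hand side of~\eqref{eq:la:fourier_expansion_tail_estimate}.

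The argument is essentially bookkeeping, and I do not anticipate a genuine obstacle; the one place to be slightly careful is the monotonicity input for the integral comparison in the last step, which (as in Lemma~\ref{la:errorbound_ekd_partial}) holds for $n$ past $(k+j-1)/(2\pi y)$, the finitely many earlier terms being harmless or absorbed by a mild assumption on $N$.
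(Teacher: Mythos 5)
Your proposal is correct and follows essentially the same route as the paper: apply Lemma~\ref{la:fourier_coefficient_tail_estimate} with $C=1$ to bound $|c(n)|$ by $n^{k+j-1}$ times the stated constant, then sum the resulting series $\sum_{n\ge N} n^{k+j-1}e^{-2\pi ny}$ via the integral comparison of Lemma~\ref{la:errorbound_ekd_partial} to get $\Ga(k+j,2\pi(N-1)y)/(2\pi y)^{k+j}$, and simplify the powers of $2\pi$. Your explicit remark about the monotonicity condition needed for the integral comparison is a point the paper's proof passes over silently, but otherwise the two arguments coincide.
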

\begin{proof}
Lemma~\ref{la:fourier_coefficient_tail_estimate} with~$C = 1$ provides an estimate for the right hand side of~\eqref{eq:thm:fourier_expansion_second_order_eisenstein_series:Ephij}. The left hand side of~\eqref{eq:la:fourier_expansion_tail_estimate} is thus bounded by
\begin{align*}
&
  \frac{k+2j-2l+3}{k+2j-2l+2}
  \frac{2^j e^2\, (l-2)!^2\, (2 \pi)^{k + j - l}}{(k - 1)! }\,
  \zeta\big(\mfrac{l-1}{2}\big)^2
  \sum_{n=N}^\infty n^{k + j - 1} \exp(-2 \pi n y)
\\
\le\;&
  \frac{k+2j-2l+3}{k+2j-2l+2}
  \frac{2^j e^2\, (l-2)!^2\, (2 \pi)^{k + j - l}}{(k - 1)! }\,
  \zeta\big(\mfrac{l-1}{2}\big)^2
  \frac{\Ga(k+j, 2 \pi (N-1) y)}{(2 \pi y)^{k+j}}
\tx{.}
\end{align*}
\end{proof}

\section{Saturation of generalized second order modular forms}
\label{sec:saturation}

In this section, we will examine the saturation at the Ramanujan $\Delta$\nbd function of some~$\rmM_\bullet$\nbd ideals generated by Eisenstein series in~$\rmM_{\bullet,\sfd}$. Recall that given a commutative ring~$R$, two $R$\nbd modules~$N \subseteq M$, and an element $r\in R$, the saturation of~$N$ in~$M$ at~$r$ is defined as
\begin{gather*}
  ( N : r^\infty )
\;:=\;
  \big\{
  m \in M \,:\,
  \exists n \in \ZZ_{\ge 0} :\, r^n m \in N
  \big\}
\tx{.}
\end{gather*}

Given integers~$\sfd \ge 0$ and~$k_0 > 2 + \sfd$, we consider the~$\rmM_\bullet$\nbd module
\begin{gather}
\label{eq:def:eisenstein_module_symd}
  \rmE_{\ge k_0,\sfd}
\;:=\;
  \lspan\, \rmM_\bullet\,
  \big\{
  E_k(\,\cdot\,; \sfd, j) \,:\,
  k \ge k_0,\,
  0 \le j \le \sfd
  \big\}
\;\subseteq\;
  \rmM_{\bullet,\sfd}
\tx{.}
\end{gather}
If~$\sfd = 0$, we suppress it from our notation and write~$\rmE_{\ge k_0}$. In our first two propositions, we describe the saturation of~\eqref{eq:def:eisenstein_module_symd}.

\begin{proposition}
\label{prop:saturation_at_delta_scalar_valued}
Given an integer~$k_0 > 2$, we have
\begin{gather*}
  \big( \rmE_{\ge k_0} : \Delta^\infty \big)
\;=\;
  \rmM_\bullet
\tx{.}
\end{gather*}
\end{proposition}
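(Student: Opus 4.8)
The statement to prove is that $(\rmE_{\ge k_0} : \Delta^\infty) = \rmM_\bullet$, i.e., every classical modular form, after multiplication by a sufficiently high power of $\Delta$, lies in the $\rmM_\bullet$-span of the weight-$\ge k_0$ Eisenstein series. Since $\rmE_{\ge k_0} \subseteq \rmM_\bullet$ already, one inclusion is trivial, and the content is the reverse inclusion $\rmM_\bullet \subseteq (\rmE_{\ge k_0} : \Delta^\infty)$. Note that with $\sfd = 0$ the definition \eqref{eq:def:eisenstein_module_symd} reduces to $\rmE_{\ge k_0} = \lspan_{\rmM_\bullet}\{E_k : k \ge k_0\}$, and by Corollary~\ref{cor:eisenstein_series_symd_filtration_steps} (with $\sfd = 0$) these are just the usual Eisenstein series $E_k$.

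My plan is as follows. First, reduce to showing that for each even weight $k$, some $\Delta^n \rmM_k$ is contained in $\rmE_{\ge k_0}$; since $(\rmE_{\ge k_0} : \Delta^\infty)$ is an $\rmM_\bullet$-submodule of $\rmM_\bullet$ and $\rmM_\bullet$ is generated in finitely many weights (indeed by $E_4, E_6$), it suffices to handle a generating set, or even just to show the saturation contains $1 \in \rmM_0$ after enough powers of $\Delta$ — but that is false in weight $0$, so one really argues weight by weight. Concretely: fix an even $k \ge 4$ and $f \in \rmM_k$. For $n$ large, $\Delta^n f \in \rmS_{k + 12n} \oplus \CC E_{k+12n}$; choosing $n$ so that $k + 12n \ge k_0$, the Eisenstein component is already in $\rmE_{\ge k_0}$, so it remains to show every cusp form of large enough weight lies in $\rmE_{\ge k_0}$. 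For this, use the classical fact that $\rmS_w = \Delta \cdot \rmM_{w-12}$ for every weight $w$, and that $\rmM_{w-12}$ is spanned by monomials $E_4^a E_6^b$ with $4a + 6b = w - 12$; each such monomial is a product of Eisenstein series of small weight, so $E_4^a E_6^b \cdot \Delta$ is $\Delta$ (a modular form) times a product of $E_4$'s and $E_6$'s. This is not yet manifestly in $\rmE_{\ge k_0}$, since the individual Eisenstein factors have weight $4$ or $6 < k_0$. The fix is to absorb powers: $E_4^3 = E_6^2 + (\tx{const})\Delta$ up to normalization (the classical relation $1728\Delta = E_4^3 - E_6^2$), so after multiplying by a further power of $\Delta$ one can always trade a product of low-weight Eisenstein series plus a cusp-form correction, and iterate; alternatively, and more cleanly, observe that $E_4 E_{k_0}$, $E_6 E_{k_0}$, $E_{k_0}^2 \in \rmE_{\ge k_0}$ together with $\Delta \cdot \rmM_\bullet \subseteq \rmE_{\ge k_0}$ once we know $\Delta \in (\rmE_{\ge k_0}:\Delta^\infty)$ reduces the problem to a bounded computation.

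The cleanest route, which I would actually write up: show first that $\Delta \in \rmE_{\ge k_0}$ — more precisely, $\Delta^m \in \rmE_{\ge k_0}$ for some $m$. Since $\rmM_{12m} \ni \Delta^m$ and for $12m \ge k_0$ we have $E_{12m} \in \rmE_{\ge k_0}$, and $\rmM_{12m} = \CC E_{12m} \oplus \rmS_{12m}$ with $\rmS_{12m}$ of dimension growing, this is not immediate; instead use the product structure. By a theorem going back to the structure of $\rmM_\bullet = \CC[E_4, E_6]$, every $E_k$ with $k \ge 4$ is a polynomial in $E_4, E_6$, hence $\rmE_{\ge k_0}$ contains $E_{k_0} \cdot P(E_4, E_6)$ for all polynomials $P$; and $E_4^3 - E_6^2 = 1728\Delta$ gives $\Delta \cdot E_{k_0} \in \rmE_{\ge k_0}$, so $\Delta \cdot \rmM_{\ge k_0} \subseteq \rmE_{\ge k_0}$, and inductively $\Delta \cdot \rmM_{\ge k_0 - 12j}$ lies in the saturation after $j$ more $\Delta$-multiplications. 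Since every $f \in \rmM_\bullet$ satisfies $\Delta^n f \in \rmM_{\ge k_0}$ for $n$ large, and then one more product with the relation puts $\Delta^{n+1} f \cdot (\tx{const}) = \Delta^n f \cdot 1728\Delta \in \Delta \cdot \rmM_{\ge k_0} \subseteq \rmE_{\ge k_0}$ — wait, that needs $\Delta^n f \in \rmM_{\ge k_0}$, which holds — we conclude $f \in (\rmE_{\ge k_0}:\Delta^\infty)$.

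\textbf{Main obstacle.} The only real subtlety is bookkeeping: making sure that "a product of Eisenstein series" is genuinely an element of the $\rmM_\bullet$-span $\rmE_{\ge k_0}$ as \emph{defined} — the definition only puts in single Eisenstein series $E_k$ with $k \ge k_0$ times arbitrary modular forms, not products of several low-weight Eisenstein series. The key lemma to isolate is therefore: for any modular form $g \in \rmM_w$ with $w \ge k_0$, we have $\Delta \cdot g \in \rmE_{\ge k_0}$. This follows because $\Delta \cdot g$ has weight $w + 12 \ge k_0 + 12$, and in that weight the Eisenstein series $E_{w+12}$ together with the cusp forms $\rmS_{w+12} = \Delta \cdot \rmM_w$ span; since $\rmM_w$ itself is $\CC E_w \oplus \rmS_w$ and $\rmS_w = \Delta \rmM_{w - 12}$, one unwinds recursively, the base of the recursion being weights $\le k_0 + 12$ where everything is a combination of $E_{k_0}, E_{k_0+2}, \ldots$ (all of weight $\ge k_0$, hence in $\rmE_{\ge k_0}$) times $\rmM_\bullet$. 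I expect the write-up to be short once this lemma is stated; the potential pitfall is an off-by-one in how many powers of $\Delta$ are needed, which is harmless since the saturation only asks for \emph{some} power.
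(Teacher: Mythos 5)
Your reduction to showing that $\Delta^m \in \rmE_{\ge k_0}$ for some $m \ge 0$ is exactly right and is how the paper's proof begins: since $(\rmE_{\ge k_0} : \Delta^\infty)$ is an $\rmM_\bullet$-submodule of $\rmM_\bullet$ and $\rmM_\bullet$ is generated by $1$, everything hinges on that single containment. But your proof of that containment has a genuine gap, and the gap sits precisely where the real content of the proposition lives. Your key lemma asserts $\Delta \cdot g \in \rmE_{\ge k_0}$ for every $g \in \rmM_w$ with $w \ge k_0$ (equivalently $\Delta \cdot \rmM_{\ge k_0} \subseteq \rmE_{\ge k_0}$), proved by recursively splitting $g = c\,E_w + \Delta h$ with $h \in \rmM_{w-12}$. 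That recursion only ever produces terms $c_j \Delta^j E_{w-12j}$, which lie in $\rmE_{\ge k_0}$ only while $w - 12j \ge k_0$; it bottoms out with a leftover $\Delta^j h_j$, $h_j \in \rmM_{w-12j}$ with $w - 12j < k_0$, which is not in the ideal generated by $\{E_k : k \ge k_0\}$ for any formal reason, and multiplying by further powers of $\Delta$ does not change this. Worse, for the one element you actually need, $g = \Delta^{m-1}$, every Eisenstein component in the recursion vanishes (a power of $\Delta$ is a cusp form at every stage), so the recursion terminates with $\Delta^m = \Delta^m \cdot 1$ and produces nothing at all. The relation $E_4^3 - E_6^2 = 1728\,\Delta$ shows $\Delta \in (E_4, E_6)$, but for $k_0 > 6$ neither $E_4$ nor $E_6$ lies in $\rmE_{\ge k_0}$, so it cannot be ``absorbed'' in the way you suggest.

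The missing ingredient is analytic, not algebraic: one needs that for suitable $m$ the products $E_{12m-k} E_k$ with $k_0 \le k \le 12m - 4$ --- each of which genuinely is in $\rmE_{\ge k_0}$, being an $\rmM_\bullet$-multiple of a single $E_k$ with $k \ge k_0$ --- together with $E_{12m}$ span enough of $\rmM_{12m}$ to contain $\Delta^m$. This is the Rankin--Kohnen--Zagier theorem; its proof uses Rankin's unfolding identity to express $\langle f, E_{12m-k} E_k \rangle$ for a Hecke eigenform $f$ in terms of $\rmL$-values of $f$, together with the non-vanishing of those values in the region of absolute convergence. No manipulation of the ring structure $\rmM_\bullet = \CC[E_4, E_6]$ alone decides whether a power of $\Delta$ lies in the ideal $(E_k : k \ge k_0)$ when $k_0 > 6$. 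This is the step the paper cites to Rankin and Kohnen--Zagier, and it is the step your write-up would need to supply.
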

\begin{proof}
Since~$\rmM_\bullet$ is generated as an~$\rmM_\bullet$\nbd module by~$1$, it suffices to show that there is a non-negative integer~$m$ such that~$\Delta^m \in \rmE_{\ge k_0}$. In other words, it suffices to show that for some integer~$m$ with~$12 m \ge 2 k_0$, we have
\begin{gather*}
  \Delta^m
\in
  \CC E_{12m}
\,+\,
  \lspan \CC
  \big\{
  E_{12m - k} E_k \,:\,
  k_0 \le k \le 12m - 4,
  \big\}
\;\subseteq\;
  \rmE_{\ge k_0}
\tx{.}
\end{gather*}
This follows by a standard argument along the lines of Kohnen--Zagier~\cite{rankin-1952,kohnen-zagier-1984}. See also~\cite{kohnen-martin-2008}.
\end{proof}

\begin{proposition}
\label{prop:saturation_at_delta_vector_valued}
Given integers~$0 \le \sfd$ and~$k_0\geq 5 + \sfd$, we have
\begin{gather*}
  \big( \rmE_{\ge k_0,\sfd} : \Delta^\infty \big)
  \;=\;
  \rmM_{\bullet,\sfd}
\tx{.}
\end{gather*}
\end{proposition}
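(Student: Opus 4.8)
The plan is to reduce everything to the scalar-valued case, Proposition~\ref{prop:saturation_at_delta_scalar_valued}, by descending through the Kuga--Shimura filtration $\rmM_{\bullet,\sfd}[j]$ and exploiting its compatibility with the Eisenstein series $E_k(\cdotop;\sfd,j)$ recorded in Corollary~\ref{cor:eisenstein_series_symd_filtration_steps}.

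First I would observe that $(\rmE_{\ge k_0,\sfd}:\Delta^\infty)$ is an $\rmM_\bullet$-submodule of $\rmM_{\bullet,\sfd}$ containing $\rmE_{\ge k_0,\sfd}$, so it suffices to prove the reverse inclusion $\rmM_{\bullet,\sfd}\subseteq(\rmE_{\ge k_0,\sfd}:\Delta^\infty)$: every vector-valued modular form of type $\symd$ must become an $\rmM_\bullet$-combination of the $E_k(\cdotop;\sfd,j)$ after multiplication by a power of $\Delta$. Since $\rmM_{k,\sfd}=0$ for odd $k$ (as $\symd$ and $\sfd$ are even), only even weights occur. I would then prove, by downward induction on $j$ from $\sfd$ down to $0$, the statement $\rmM_{\bullet,\sfd}[j]\subseteq(\rmE_{\ge k_0,\sfd}:\Delta^\infty)$; adopting the convention $\rmM_{\bullet,\sfd}[\sfd+1]:=0$ makes the base case part of a single uniform inductive step, and the case $j=0$ is the proposition.

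For the inductive step, take $F\in\rmM_{k,\sfd}[j]$ with $k$ even and set $f_j:=\pilow_{k,\sfd,j}(F)\in\rmM_{k+2j-\sfd}$, using Lemma~\ref{la:projection_to_lowest_symd_component}. Applying Proposition~\ref{prop:saturation_at_delta_scalar_valued} with the lower bound $k_0+2j-\sfd$ (which is at least $5$, hence exceeds $2$), there is an integer $m\ge 0$ and a homogeneous decomposition $\Delta^m f_j=\sum_\alpha h_\alpha E_{k'_\alpha}$ with $h_\alpha\in\rmM_\bullet$ homogeneous and each $k'_\alpha\ge k_0+2j-\sfd$ even. Writing $k''_\alpha:=k'_\alpha-2j+\sfd\ge k_0$ (still even, hence $\equiv\sfd\pmod 2$, so that $E_{k''_\alpha}(\cdotop;\sfd,j)$ is defined and lies in $\rmE_{\ge k_0,\sfd}$), I would lift the identity by setting $G:=\sum_\alpha h_\alpha\,E_{k''_\alpha}(\cdotop;\sfd,j)\in\rmE_{\ge k_0,\sfd}$. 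A weight count shows $G$ is homogeneous of weight $k+12m$, the same as $\Delta^m F$; and since $\pilow_{\bullet,\sfd,j}$ is an $\rmM_\bullet$-module homomorphism with $\pilow_{k''_\alpha,\sfd,j}\big(E_{k''_\alpha}(\cdotop;\sfd,j)\big)=E_{k'_\alpha}$ by Corollary~\ref{cor:eisenstein_series_symd_filtration_steps}, we get $\pilow_{\bullet,\sfd,j}(G)=\sum_\alpha h_\alpha E_{k'_\alpha}=\Delta^m f_j=\pilow_{\bullet,\sfd,j}(\Delta^m F)$. Hence $\Delta^m F-G$ lies in the kernel of $\pilow_{\bullet,\sfd,j}$ restricted to $\rmM_{\bullet,\sfd}[j]$, which is exactly $\rmM_{\bullet,\sfd}[j+1]$, because an element of $\rmM_{\bullet,\sfd}$ whose $(X-\tau)^r$-coefficients vanish for $r\le j$ automatically has scalar-valued $(X-\tau)^{j+1}$-coefficient. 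By the inductive hypothesis $\Delta^m F-G\in(\rmE_{\ge k_0,\sfd}:\Delta^\infty)$, so some further power $\Delta^{m'}$ carries it into $\rmE_{\ge k_0,\sfd}$; adding back $\Delta^{m'}G\in\rmE_{\ge k_0,\sfd}$ shows $\Delta^{m+m'}F\in\rmE_{\ge k_0,\sfd}$, i.e.\ $F\in(\rmE_{\ge k_0,\sfd}:\Delta^\infty)$, completing the induction.

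The step I expect to require the most care is the weight bookkeeping in the lift: one must apply the scalar saturation with lower bound $k_0+2j-\sfd$ rather than $k_0$, precisely so that undoing $\pilow_{k,\sfd,j}$ produces weights $k''_\alpha=k'_\alpha-2j+\sfd\ge k_0$ and the lifted Eisenstein series genuinely belong to $\rmE_{\ge k_0,\sfd}$; the matching parity of all the weights in play (so that the relevant $E_{k''_\alpha}(\cdotop;\sfd,j)$ exist) and the homogeneity of $G$ are then automatic. The remaining ingredients---that $\rmM_{\bullet,\sfd}[j]$ and $\rmE_{\ge k_0,\sfd}$ are $\rmM_\bullet$-modules, that saturation is a module operation, and the identification of the kernel of $\pilow_{\bullet,\sfd,j}$ with the next filtration step---are routine.
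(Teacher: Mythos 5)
Your proof is correct and follows essentially the same route as the paper: downward induction along the Kuga--Shimura filtration $\rmM_{\bullet,\sfd}[j]$, using Corollary~\ref{cor:eisenstein_series_symd_filtration_steps} to lift the scalar-valued saturation of Proposition~\ref{prop:saturation_at_delta_scalar_valued} through $\pilow_{\bullet,\sfd,j}$, whose kernel on $\rmM_{\bullet,\sfd}[j]$ is the next filtration step. The only difference is presentational: you unwind the paper's Four-Lemma and cokernel comparison into an explicit element chase, with your lift $G$ playing the role of the splitting $\sigma$ mentioned in the paper's proof.
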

\begin{proof}
We establish the statement by showing via induction on~$0 \le j \le \sfd$ that
\begin{gather}
\label{eq:prop:saturation_at_delta_vector_valued:induction}
  \big( \rmE_{\ge k_0,\sfd} : \Delta^\infty \big) \cap \rmM_{\bullet,\sfd}[j] 
\;=\;
  \big( \rmE_{\ge k_0,\sfd} \cap \rmM_{\bullet,\sfd}[j] : \Delta^\infty \big)
\;=\;
  \rmM_{\bullet,\sfd}[j]
\tx{.}
\end{gather}

If~$j = \sfd$, then the projection~$\pilow_{k,\sfd,\sfd}$ in~\eqref{eq:la:projection_to_lowest_symd_component} is an isomorphism. By Corollary~\ref{cor:eisenstein_series_symd_filtration_steps} it maps Eisenstein series to Eisenstein series. Thus~\eqref{eq:prop:saturation_at_delta_vector_valued:induction} follows directly from Proposition~\ref{prop:saturation_at_delta_scalar_valued}.

Assume now that~$j < \sfd$ and that~\eqref{eq:prop:saturation_at_delta_vector_valued:induction} is true for~$j+1$ instead of~$j$. The projection~$\pilow_{k,\sfd,j}$ and Corollary~\ref{cor:eisenstein_series_symd_filtration_steps} yield the following diagram:
\begin{center}
\begin{tikzpicture}
\matrix(m)[matrix of math nodes,
row sep = 2em, column sep = 3em,
text height = 1.5em, text depth = 1.5ex]
{%
  \big( \rmE_{\ge k_0,\sfd} : \Delta^\infty \big)
  \cap \rmM_{\bullet,\sfd}[j+1] 
& \big( \rmE_{\ge k_0,\sfd} : \Delta^\infty \big)
  \cap \rmM_{\bullet,\sfd}[j] 
& \big( \rmE_{\ge k_0+2j-\sfd} : \Delta^\infty \big)
\\ 
  \rmM_{\bullet,\sfd}[j+1] 
& \rmM_{\bullet,\sfd}[j] 
& \rmM_{\bullet}
\\};

\path[right hook-latex]
(m-1-1) edge (m-1-2)
(m-2-1) edge (m-2-2);
\path[-latex]
(m-1-2) edge (m-1-3)
(m-2-2) edge (m-2-3);
\path[left hook-latex]
(m-1-1) edge (m-2-1)
(m-1-2) edge (m-2-2)
(m-1-3) edge (m-2-3);
\end{tikzpicture}
\end{center}
By induction and by Proposition~\ref{prop:saturation_at_delta_scalar_valued}, the left and right vertical arrows are equalities. To finish the proof by the Four-Lemma, we have to show that the co-kernel of the top right horizontal map injects into the co-kernel of the bottom right one.

To prove this, we observe that there is a natural splitting~$\sigma$ to the projection from~$E_{\ge k_0,\sfd}$ to~$E_{\ge k_0 + 2j - \sfd}$. Given a modular form~$f$ and an integer~$h \ge 0$ with~$\Delta^h f \in \rmE_{\ge k_0+2j-\sfd}$, the obstruction to finding a preimage of~$f_j$ in the top center space is given by the first~$h$ Fourier coefficients of
\begin{gather*}
  \sigma(\Delta^h f)
  \,+\,
 \big( \rmE_{\ge k_0,\sfd} : \Delta^\infty \big) \cap \rmM_{\bullet,\sfd}[j+1] 
\;=\;
  \sigma(\Delta^h f)
  \,+\,
 \rmM_{\bullet,\sfd}[j+1]
\tx{.}
\end{gather*}
The right hand side coincides with the obstruction that arises in the bottom row, and we hence conclude the proof.
\end{proof}

To make our statement about generalized second order modular forms more consice, we introduce the
following spaces of generalized second order Eisenstein series, analogous to
$\rmE_{\geq k_0}$ and $\rmE_{\geq k_0,\sfd}$. For integers~$\sfd \ge 0$
and~$k_0 \geq 5 + \sfd$, we set
\begin{gather}
\label{eq:def:generalized_second_order_eisenstein_modules}
\begin{aligned}
  \rmE^{\mathrm{pure}\,[1]}_{\ge k_0} \big( \symd(X), \bbone \big)
\;&{}:=\;
  \lspan \rmM_\bullet \big\{
  E^{[1]}_k\big( \,\cdot\,;\phi \big)\,:\,
  k \ge k_0,\,
  0 \le j \le \sfd,\,
  \phi \in \Zpara\big( \SL{2}(\ZZ), \symd(X) \big)
  \big\}
\tx{,}
\\
  \rmE^{\mathrm{pure}\,[1]}_{\ge k_0}\big( \bbone, \symd(X) \big)
\;&{}:=\;
  \lspan \rmM_\bullet \big\{
  E^{[1]}_k\big( \,\cdot\,;\phi, j\big)\,:\,
  k \ge k_0,\,
  0 \le j \le \sfd,\,
  \phi \in \Zpara\big( \SL{2}(\ZZ), \symd(X)^\vee \big)
  \big\}
\tx{.}
\end{aligned}
\end{gather}

For the proof of our next theorem, we leverage
Proposition~\ref{prop:saturation_at_delta_scalar_valued} and
Proposition~\ref{prop:saturation_at_delta_vector_valued} in the same way as we employed
Proposition~\ref{prop:saturation_at_delta_scalar_valued} in the proof of
Proposition~\ref{prop:saturation_at_delta_vector_valued}.
\begin{theorem}
\label{thm:snd_order_eisenstein_saturation}
Given integers~$0 \le \sfd$ and~$k_0 \geq 5 + \sfd$, we have
\begin{align*}
  \big(
  \rmE^{\mathrm{pure}\,[1]}_{\ge k_0}(\symd(X), \bbone) + \rmM_{\bullet,\sfd}
  \,:\, \Delta^\infty\big)
\;&{}=\;
  \rmM^{[1]}_\bullet(\symd(X), \bbone)
\quad\tx{and}\\
  \big(
  \rmE^{\mathrm{pure}\,[1]}_{\ge k_0}(\bbone, \symd(X)) + \rmM_\bullet
  \,:\, \Delta^\infty
  \big)
\;&{}=\;
  \rmM^{[1]}_\bullet(\bbone, \symd(X))
\tx{.}
\end{align*}
\end{theorem}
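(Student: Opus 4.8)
The plan is to transport the \emph{modular deficit} through both asserted identities and then invoke the scalar and vector-valued saturation statements already established. Throughout let $(\rho,\sigma)$ be either $(\symd(X),\bbone)$ or $(\bbone,\symd(X))$, and write $\rmM_\bullet(\rho)$ for $\rmM_{\bullet,\sfd}$ when $\rho=\symd(X)$ and for $\rmM_\bullet$ when $\rho=\bbone$, so that $\rmM_\bullet(\rho)$ is exactly the ambient space of ordinary modular forms appearing in the statement. By Proposition~\ref{prop:g2es_is_g2mf} the generalized second order Eisenstein modules in~\eqref{eq:def:generalized_second_order_eisenstein_modules} are contained in $\rmM^{[1]}_\bullet(\rho,\sigma)$, and $\rmM_\bullet(\rho)\subseteq\rmM^{[1]}_\bullet(\rho,\sigma)$, so the inclusion ``$\subseteq$'' in both identities is automatic. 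Since the saturation of a graded submodule is graded, it remains to put an arbitrary homogeneous $f\in\rmM^{[1]}_k(\rho,\sigma)$ into the saturation.

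The first step is to compute the modular deficit of a generalized second order Eisenstein series. Rerunning the reordering-of-summation argument from the proof of Proposition~\ref{prop:g2es_is_g2mf} and discarding the $\delta$-dependent contribution by modular invariance of the type-$\sigma$ Eisenstein series, one obtains for all $\delta\in\SL2(\ZZ)$ that
\begin{gather*}
  E_k^{[1]}(\,\cdot\,;\phi,f)\big|_{k,\rho}(\delta-1)
\;=\;
  -\,\phi(\delta^{-1})\Big(
    \sum_{\gamma\in\Ga_\infty\backslash\SL2(\ZZ)} f\big|_{k,\sigma}\gamma
  \Big)
\tx{.}
\end{gather*}
For the admissible seeds fixed in~\eqref{eq:def:second_order_eisenstein_series_sym_cases}, namely $f=1$ when $\sigma=\bbone$ and $f=(X-\tau)^j$ when $\sigma=\symd(X)$, the inner sum is precisely the classical Eisenstein series $E_k$, respectively the series $E_k(\,\cdot\,;\sfd,j)$ of~\eqref{eq:def:eisenstein_series_symd}. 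Since $\phi(\delta^{-1})$ is a fixed $\CC$-linear map on $V(\sigma)$, multiplication by a scalar modular form commutes with it and factors out of the deficit; hence for any $\rmM_\bullet$-linear combination $G$ of the generators of $\rmE^{\mathrm{pure}\,[1]}_{\ge k_0}(\rho,\sigma)$, the deficit of $G$ equals $-\phi(\delta^{-1})$ applied to the corresponding $\rmM_\bullet$-combination of the $E_{k'}$ (resp.\ of the $E_{k'}(\,\cdot\,;\sfd,j')$), which is an element of $\rmE_{\ge k_0}$ (resp.\ $\rmE_{\ge k_0,\sfd}$) from~\eqref{eq:def:eisenstein_module_symd}.

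Now take $f\in\rmM^{[1]}_k(\rho,\sigma)$ with deficit $f|_{k,\rho}(\gamma-1)=\sum_{\phi\in A}\phi(\gamma^{-1})(g_\phi)$ as in Definition~\ref{def:generalized_second_order_modular_forms}, with $A\subseteq\Zpara(\SL2(\ZZ),\sigma^\vee\otimes\rho)$ finite and $g_\phi\in\rmM_k(\sigma)$. By Proposition~\ref{prop:saturation_at_delta_scalar_valued} (if $\sigma=\bbone$) or Proposition~\ref{prop:saturation_at_delta_vector_valued} (if $\sigma=\symd(X)$) there is a single $m\ge 0$ with $\Delta^m g_\phi\in\rmE_{\ge k_0}$ (resp.\ $\rmE_{\ge k_0,\sfd}$) for every $\phi\in A$; expand it as an $\rmM_\bullet$-combination of the Eisenstein generators $E_{k'}$ (resp.\ $E_{k'}(\,\cdot\,;\sfd,j')$) with $k'\ge k_0\ge 5+\sfd$, and replace each $E_{k'}$ by $E_{k'}^{[1]}(\,\cdot\,;\phi)$ (resp.\ each $E_{k'}(\,\cdot\,;\sfd,j')$ by $E_{k'}^{[1]}(\,\cdot\,;\phi,j')$), which is legitimate because $k'\ge 5+\sfd$; call the resulting element $G_\phi\in\rmE^{\mathrm{pure}\,[1]}_{\ge k_0}(\rho,\sigma)$. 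By the first step, $G_\phi$ has deficit $-\phi(\gamma^{-1})(\Delta^m g_\phi)$. Since $(\Delta^m f)|_{k+12m,\rho}(\gamma-1)=\sum_{\phi\in A}\phi(\gamma^{-1})(\Delta^m g_\phi)$ by $\rmM_\bullet$-linearity, the function $F:=\Delta^m f+\sum_{\phi\in A}G_\phi$ has vanishing modular deficit; being holomorphic and of moderate growth, it lies in $\rmM_\bullet(\rho)$, and therefore $\Delta^m f=F-\sum_{\phi\in A}G_\phi\in\rmE^{\mathrm{pure}\,[1]}_{\ge k_0}(\rho,\sigma)+\rmM_\bullet(\rho)$. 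This shows that $f$ lies in the saturation and completes the argument.

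The only point I expect to require genuine care is the first step: pinning down the exact deficit formula for $E_k^{[1]}(\,\cdot\,;\phi,f)$ and verifying that the two admissible seeds reproduce precisely the Eisenstein generators of $\rmE_{\ge k_0}$ and $\rmE_{\ge k_0,\sfd}$ within the matching even weight ranges $k'\ge k_0\ge 5+\sfd$, so that Proposition~\ref{prop:saturation_at_delta_scalar_valued} and Proposition~\ref{prop:saturation_at_delta_vector_valued} apply without modification. The remaining bookkeeping—additivity of the deficit, pulling scalar modular forms through the linear maps $\phi(\delta^{-1})$, and tracking gradings—is routine, mirroring the way Proposition~\ref{prop:saturation_at_delta_scalar_valued} is used inside the proof of Proposition~\ref{prop:saturation_at_delta_vector_valued}.
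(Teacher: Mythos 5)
Your proof is correct, and it rests on the same reduction as the paper's: transport everything through the modular deficit and invoke Propositions~\ref{prop:saturation_at_delta_scalar_valued} and~\ref{prop:saturation_at_delta_vector_valued} for the ordinary Eisenstein modules $\rmE_{\ge k_0}$ and $\rmE_{\ge k_0,\sfd}$. The difference is one of packaging. The paper organizes the argument as a commutative diagram whose rows are the left-exact sequences $\rmM_\bullet(\rho) \hookrightarrow \rmM^{[1]}_\bullet(\rho,\sigma) \to \Zpara(\SL{2}(\ZZ),\sigma^\vee\otimes\rho)\otimes\rmM_\bullet(\sigma)$ and closes with the Four-Lemma after comparing cokernels of the two deficit maps; you instead unwind that diagram chase into an explicit element-wise construction: derive the deficit formula
\begin{gather*}
  E_k^{[1]}(\,\cdot\,;\phi,f)\big|_{k,\rho}(\delta-1)
  \;=\;
  -\,\phi(\delta^{-1})\Big(\sum_{\gamma\in\Ga_\infty\backslash\SL{2}(\ZZ)} f\big|_{k,\sigma}\gamma\Big)
\tx{,}
\end{gather*}
lift $\Delta^m g_\phi\in\rmE_{\ge k_0}$ (resp.\ $\rmE_{\ge k_0,\sfd}$) to a combination $G_\phi$ of generalized second order Eisenstein series with the opposite deficit, and observe that $\Delta^m f+\sum_\phi G_\phi$ has vanishing deficit and is therefore an ordinary (vector-valued) modular form. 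Your version makes explicit the deficit computation that the paper only gestures at (``it sends $E_k(\,\cdot\,;\phi,j)$ to $E_k(\,\cdot\,;\sfd,j)$'') and sidesteps the cokernel comparison entirely, since the needed surjectivity onto $\Zpara\otimes\rmE_{\ge k_0,\sfd}$ is witnessed directly by the construction of the $G_\phi$; the price is that it hides the structural statement (the diagram of $\rmM_\bullet$-modules) that the paper's Remark~\ref{rm:thm:snd_order_eisenstein_saturation} wants to generalize to socle filtrations of higher depth types.
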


\begin{remark}
\label{rm:thm:snd_order_eisenstein_saturation}
The proof of Theorem~\ref{thm:snd_order_eisenstein_saturation} can be translated to the language of vector-valued modular forms~\cite{mertens-raum-2021}, and then yields an induction over the socle series. In particular, when combined with~\cite{raum-xia-2020} it can be generalized to all modular forms of virtually real-arithmetic type. In this form it subsumes all iterated Eichler-Shimura integrals.
\end{remark}

\begin{proof}
We focus on the second case, and leave the first one, whose proof proceeds analogously, to the reader. Observe that when viewing~$\rmM_{\bullet,\sfd}$ as a trivial~$\SL{2}(\ZZ)$ representation, we have an identification
\begin{gather}
\label{eq:thm:snd_order_eisenstein_saturation:cocycle}
  \Zpara\big( \SL{2}(\ZZ),\, \symd(X)^\vee \otimes \rmM_{k,\sfd} \big)
\;=\;
  \Zpara\big( \SL{2}(\ZZ),\, \symd(X)^\vee \big) \otimes \rmM_{k,\sfd}
\tx{.}
\end{gather}
Definition~\ref{def:generalized_second_order_modular_forms} yields a map from~$\rmM_k^{[1]}(\bbone, \symd(X))$ to~\eqref{eq:thm:snd_order_eisenstein_saturation:cocycle}. From the definition of generalized second order Eisenstein series in~\eqref{eq:def:eisenstein_series_symd}, we infer that it sends~$E_k(\,\cdot\,;\phi,j)$ to~$E_k(\,\cdot\,; \sfd,j)$.

We assemble this into a commutative diagram akin to the one that we used in the proof of Proposition~\ref{prop:saturation_at_delta_vector_valued}.
\begin{center}
\begin{tikzpicture}
\matrix(m)[matrix of math nodes,
row sep = 2em, column sep = 3em,
text height = 1.5em, text depth = 1.5ex]
{%
  \rmM_\bullet
& \big(
  \rmE^{\mathrm{pure}\,[1]}_{\ge k_0}(\bbone, \symd(X)) + \rmM_\bullet
  \,:\, \Delta^\infty
  \big)
& \Zpara\big( \SL{2}(\ZZ),\, \symd(X)^\vee \big)
  \otimes 
  \big( \rmE_{\ge k_0, \sfd} : \Delta^\infty \big)
\\
  \rmM_\bullet
& \rmM_\bullet\big( \bbone, \symd(X) \big)
& \Zpara\big( \SL{2}(\ZZ),\, \symd(X)^\vee \big)
  \otimes \rmM_{\bullet,\sfd}
\\};

\path[right hook-latex]
(m-1-1) edge (m-1-2)
(m-2-1) edge (m-2-2);
\path[-latex]
(m-1-2) edge (m-1-3)
(m-2-2) edge (m-2-3);
\path[right hook-latex]
(m-1-1) edge (m-2-1)
(m-1-2) edge (m-2-2)
(m-1-3) edge (m-2-3);
\end{tikzpicture}
\end{center}
For clarity, note that the bottom left space arises from~$\rmM_\bullet$ in the statement of Theorem~\ref{thm:snd_order_eisenstein_saturation} and could be replaced by~$\rmE_{\ge k_0}$ without altering this proof. Now the left vertical arrow is trivially an equality, and the right one is so by Proposition~\ref{prop:saturation_at_delta_vector_valued}. We finish the proof by applying the Four-Lemma after inspecting the co-kernels of the right horizontal maps exactly as in the proof of Proposition~\ref{prop:saturation_at_delta_vector_valued}.
\end{proof}

%\begin{remark}
%For the reader who is more inclined towards commutative algebra, we record that the application of the Four-Lemma can be skipped when rephrasing the saturation in terms of localizations. Then the proof condenses to the fact that a set generates a filtered module if and only if it generates the associated graded module.
%\end{remark}

\section{The Eichler integral of the discriminant function}
\label{sec:bootstrap}

In this section we demonstrate and compare two ways to recover the Eichler integral~$\cE(\Delta)$ of the Ramanujan discriminant function via Theorem~\ref{thm:snd_order_eisenstein_saturation}. Calculations were performed using the Nemo computer algebra package~\cite{fieker-hart-hofmann-johansson-2017}. Recall from Section~\ref{ssec:eich_int_g2mf} that we have
\begin{gather*}
  \cE(\Delta)
\in
  \rmM^{[1]}_{-10}\big( \bbone, \sym^{10} \big)
\tx{.}
\end{gather*}
The mechanics of our example apply to any element of~$\rmM^{[1]}_k( \bbone, \symd )$. One can derive from Sections~\ref{sec:eis} and~\ref{sec:saturation} a similar procedure for generalized second order modular forms of type~$(\symd, \bbone)$. Observe that in our specific case, we have a precise expression for the cusp expansion:
\begin{gather*}
  \cE(\Delta)(\tau)
=
  \frac{- \Gamma(k-1)}{(2 \pi i)^{k-1}}\,
  \sum_{n=1}^\infty n^{1-k} c(\Delta;\,n) e(n \tau)
\tx{,}
\end{gather*}
where the~$c(\Delta; n)$ are the Fourier coefficients of~$\Delta$. Since~$\Delta$ is a level-$1$ modular form, this allows us to evaluate~$\cE(\Delta)$ numerically in an efficient way at any point of~$\HS$. In applications that follow the method outlined in this section such efficient evaluations are not necessarily possible, but we need to rely on the numerical values at a small number of points or alternatively some of the Fourier coefficients.

Throughout this section, we write~$\varphi$ for the cocycle associated with~$\cE(\Delta)$. Paralleling the structure of Theorem~\ref{thm:snd_order_eisenstein_saturation}, first we use~$\phi$ to express~$\Delta^h \cE(\Delta)$ for a fixed integer~$h \ge 2$ as the sum of an~a~priori undetermined modular form~$f_h$ of weight~$12h-10$ and a linear combination of products of Eisenstein series and generalized second order Eisenstein series. In a second step we determine~$f_h$ from the numerical evaluation of~$\cE(\Delta)$.

The trade-offs are as follows: for~$h = 2$, the modular form~$f_h \in \rmM_{12h-10}$ vanishes, but the generalized second order Eisenstein series that appear converge slowly. In this case, $\Delta^h \cE(\Delta)$ is determined by its cocycle~$\Delta^h \phi$. The larger~$h$ is, the faster the relevant generalized second order Eisenstein series converge, but the more evaluations of~$\cE(\Delta)$ at suitable points of~$\HS$ are needed to determine~$f_h$.

This can be ``bootstapped'', that is, we can use the slowly converging but fully determined expression for~$h = 2$ to compute a faster converging expression for a different~$h$. By repeating this procedure, we get successively faster rates of convergence. However, attention must be paid to the possible accumulation of numerical errors.

We give a concrete description of the above procedure in the special cases~$h = 2$ and~$h = 5$.

\paragraph{An expression for $\Delta^2 \cE(\Delta)$}

In order to compute values of~$\cE(\Delta)$, we assume the values of~$\Delta$ and consider
\begin{gather*}
  \Delta^2\, \cE(\Delta)(\tau)
\in
  \rmM_{14}^{[1]}\big( \bbone, \sym^{10} \big)
\tx{.}
\end{gather*}

The cocycle of~$\Delta^2 \cE(\Delta)$ can coincides with the one of~$\cE(\Delta)$. Via~\eqref{eq:prop:eichler_integral:cocycle}, we express it in terms of~$\Delta^2\, (X - \tau)^{10}$, which leads us to considering the relation
\begin{gather*}
  \Delta^2
=
  c_1
  E_{24}
+
  c_2
  E_4 E_{20}
+
  c_3
  E_6 E_{18}
\end{gather*}
with coefficients
\begin{gather*}
  c_1
=
  \mfrac{70933884092880847}{655667091203604480000}
\tx{,}\quad
  c_2
=
  -
  \mfrac{1912212380581}{26083744727040000}
\tx{,}\quad
  c_3
=
  -
  \mfrac{97825033079}{2804992903544832}
\tx{.}
\end{gather*}
It allows us to conclude that for some modular form~$f\in\rmM_{14}$ we have
\begin{gather}
\label{eq:delta_eichler_integral_as_eisenstein_product}
  \Delta^2\, \cE(\Delta)
\;=\;
  f
\,-\,
  c_1
  E_{14}^{[1]}(\,\cdot\,;\,\phi,10)
-
  c_2
  E_4 E_{10}^{[1]}(\,\cdot\,;\,\phi,10)
-
  c_3
  E_6 E_{8}^{[1]}(\,\cdot\,;\,\phi,10)
\tx{.}
\end{gather}

The Fourier series expansions of generalized second Eisenstein series in
Theorem~\ref{thm:fourier_expansion_second_order_eisenstein_series} reveals that~$f$ is
a cusp form, which implies that~$f = 0$, and thus $\Delta^2\cE(\Delta)$ is fully
determined by~\eqref{eq:delta_eichler_integral_as_eisenstein_product}.

\paragraph{An expression for $\Delta^5 \cE(\Delta)$}

Performing the same steps as before, we calculate that
\begin{gather*}
  \Delta^5
\;=\;
  c_1 E_{60}
+ c_2 E_4 E_{56}
+ c_3 E_6 E_{54}
+ c_4 E_8 E_{52}
+ c_5 E_{10} E_{50}
+ c_6 E_{12} E_{48}
\end{gather*}
with coefficients $c_i \in \QQ$ whose numerators and denominators are on the order
of $10^{80}$. Approximatively, we have
\begin{alignat*}{3}
  c_1 &{}\approx -2.7768336 \cdot 10^{-7}
\tx{,}\;
&
  c_2 &{}\approx 1.41222683 \cdot 10^{-7}
\tx{,}\;
&
  c_3 &{}\approx 9.5721307  \cdot 10^{-8}
\tx{,}\;
\\
  c_4 &{}\approx 3.3445439 \cdot  10^{-8}
\tx{,}
&
  c_5 &{}\approx 6.67556586 \cdot 10^{-9}
\tx{,}
&
  c_6 &{}\approx 6.1836892 \cdot 10^{-10}
\tx{.}
\end{alignat*}
Similar to the previous case, using only the cocycle associated with~$\cE(\Delta)$, we find that there exists a cusp form~$f \in \rmS_{50}$ with
\begin{multline*}
  \Delta^5 \cE(\Delta)
\;=\;
  f
- c_1 E_{50}^{[1]}(\,\cdot\,;\,\phi,10)
- c_2 E_4 E_{46}^{[1]}(\,\cdot\,;\,\phi,10)
- c_3 E_6 E_{44}^{[1]}(\,\cdot\,;\,\phi,10)
- c_4 E_8 E_{42}^{[1]}(\,\cdot\,;\,\phi,10)\\
- c_5 E_{10} E_{40}^{[1]}(\,\cdot\,;\,\phi,10)
- c_6 E_{12} E_{38}^{[1]}(\,\cdot\,;\,\phi,10)
\tx{.}
\end{multline*}
For simplicity, we write~$f_E$ for the Eisenstein terms on the right hand side, so that the previous relation can be written as~$\Delta^5 \cE(\Delta) = f - f_E$.

Since~$\dim(\rmS_{50}) = 3$, it suffices to evaluate~$\Delta^5 \cE(\Delta) + f_E$ at three suitable points on the Poincar\'e upper half plane and compare with the evaluation of an appropriate basis of~$\rmS_{50}$, to determine $f$. This is where the evaluation of~$\Delta^5 \cE(\Delta)$ is needed, which can be obtained via the case~$h = 2$, as explained before. In the case at hand, in which we merely illustrate the application of Theorem~\ref{thm:snd_order_eisenstein_saturation}, we evaluate~$\cE(\Delta)$ directly via its Fourier expansion in this step. For~$\rmS_{50}$, we use a basis~$f_1, f_2, f_3$ of~$\rmS_{50}$ with initial Fourier expansions
\begin{gather*}
  f_1 = e(\tau) - 2064 e(2 \tau) + 1358532 e(3 \tau) + \cO\big(e(4 \tau)\big)
\tx{,}\\
  f_2 = e(2\tau) - 1080 e(3\tau) + \cO\big(e(4 \tau)\big)
\tx{,}\quad
  f_3 = e(3\tau)q^3 + \cO\big(e(4 \tau)\big)
\tx{.}
\end{gather*}
As points on the upper half plane, we use
\begin{gather*}
  \tau_1 = 1 + 2 i
\tx{,}\quad
  \tau_2 = \mfrac{1}{2} + 3 i
\tx{,}\quad
  \tau_3 = \mfrac{1}{3} + 4 i
\tx{,}
\end{gather*}
which yield a matrix with entries~$f_i(\tau_j)$, $1 \le i,j \le 3$, of sufficiently large determinant, allowing for numerical stability when solving for~$f$.
% From their Fourier expansions, we have the values
% \begin{gather*}
%   f_1(\tau_1) \approx 1.00002
% \tx{,}\;
%   f_1(\tau_2) \approx -6.513 \cdot 10^{-9}
% \tx{,}\;
%   f_1(\tau_3) \approx 4.241 \cdot 10^{-17}
% \tx{;}
% \\
%   f_2(\tau_1) \approx 1.000 - i 3.210 \cdot 10^{-8}
% \tx{,}\;
%   f_2(\tau_2) \approx -6.081 \cdot 10^{-12} + i 1.053 \cdot 10^{-11}
% \tx{,}\;
%   f_2(\tau_3) \approx -7.395 \cdot 10^{-23} - i 1.281 \cdot 10^{-22}
% \tx{;}
% \\
%   f_3(\tau_1) \approx 1.000 - i 6.922 \cdot 10^{-11}
%
% \tx{,}\;
%   f_3(\tau_2) \approx 1.065 \cdot 10^{-24} + i 2.271 \cdot 10^{-14}
% \tx{,}\;
%   f_3(\tau_3) \approx -5.158 \cdot 10^{-28} + i 1.265 \cdot 10^{-38}
% \tx{.}
% \end{gather*}
Theorem~\ref{thm:fourier_expansion_second_order_eisenstein_series} and the tail estimates in Lemmas~\ref{la:fourier_coefficient_tail_estimate} and~\ref{la:fourier_expansion_tail_estimate} yield
\begin{align*}
  \Delta^5 \cE(\Delta)(\tau_1) + f_E(\tau_1)
&{}\approx
  7.334 \cdot 10^{-46}  + i 5.603 \cdot 10^{-25}
\tx{,}\\
  \Delta^5 \cE(\Delta)(\tau_2) + f_E(\tau_2)
&{}\approx
  1.574 \cdot 10^{-51}  + i 3.940 \cdot 10^{-31}
\tx{,}\\
  \Delta^5 \cE(\Delta)(\tau_3) + f_E(\tau_3)
&{}\approx
  -9.190 \cdot 10^{-37} - i 1.919 \cdot 10^{-36}
\tx{,}
\end{align*}
from which we obtain~$f = d_1 f_1 + d_2 f_2 + d_3 f_3$ with
\begin{align*}
  d_1 &\approx -3.504686 \cdot 10^{-38} - i 1.4752509 \cdot 10^{-25}
\tx{,}\\
  d_2 &\approx -5.371557 \cdot 10^{-30} - i 2.207136 \cdot 10^{-15}
\tx{,}\\
  d_3 &\approx 1.5378812 \cdot 10^{-24} - i 1.0722708 \cdot 10^{-10}
\tx{.}
\end{align*}

%%%%%%%%%%%%%%%%%%%%%%%%%%%%%%%%%%%%%%%%%%%%%%%%%%
%%% BIBLIOGRAPHY

\ifbool{nobiblatex}{%
  \bibliographystyle{alpha}%
  \bibliography{bibliography.bib}%
}{%
  \Needspace*{4em}
  \printbibliography[heading=none]%[heading=bibnumbered]
}

%%%%%%%%%%%%%%%%%%%%%%%%%%%%%%%%%%%%%%%%%%%%%%%%%%
%%% AFFILIATIONS

\Needspace*{3\baselineskip}
\noindent
\rule{\textwidth}{0.15em}

%% TODO: INSERT LIST OF AFFILIATIONS
{\noindent\small
%Albin Ahlbäck\\
Chalmers tekniska högskola,
Institutionen för Matematiska vetenskaper,
SE-412 96 Göteborg, Sweden\\
E-mail: \url{albin.ahlback@gmail.com}\\
Homepage: \url{https://albinahlback.gitlab.io}
}\vspace{.5\baselineskip}

{\noindent\small
%Tobias Magnusson\\
Chalmers tekniska högskola och Göteborgs Universitet,
Institutionen för Matematiska vetenskaper,
SE-412 96 Göteborg, Sweden\\
E-mail: \url{tobmag@chalmers.se}\\
Homepage: \url{https://tobiasmagnusson.com}
}\vspace{.5\baselineskip}

{\noindent\small
%Martin Raum\\
Chalmers tekniska högskola och G\"oteborgs Universitet,
Institutionen f\"or Matematiska vetenskaper,
SE-412 96 G\"oteborg, Sweden\\
E-mail: \url{martin@raum-brothers.eu}\\
Homepage: \url{http://martin.raum-brothers.eu}
}%\vspace{.5\baselineskip}

%%%%%%%%%%%%%%%%%%%%%%%%%%%%%%%%%%%%%%%%%%%%%%%%%%
%%% TODOS

\ifdraft{%
\listoftodos%
}

\end{document}

%% vim: spell spelllang=en_us